\newtheorem{Thm}{Theorem}[section]
\newtheorem{Lem}[Thm]{Lemma}
\newtheorem{Cor}[Thm]{Corollary}
\newtheorem{Prop}[Thm]{Proposition}
\newtheorem{Conj}[Thm]{Conjecture}
\newtheorem{``Conj"}[Thm]{``Conjecture"}
\newtheorem{Claim}[Thm]{Claim}
\theoremstyle{remark}
\theoremstyle{definition}
\newtheorem{Def}[Thm]{Definition}
\newtheorem*{ack}{Acknowledgments}
\newcommand{\lct}{\mathop{\mathrm{lct}}\nolimits}
\newcommand{\Aut}{\mathop{\mathrm{Aut}}\nolimits}
\newcommand{\SL}{\mathop{\mathrm{SL}}\nolimits}
\newcommand{\GL}{\mathop{\mathrm{GL}}\nolimits}
\newcommand{\PGL}{\mathop{\mathrm{PGL}}\nolimits}
\newcommand{\DF}{\mathop{\mathrm{DF}}\nolimits}
\begin{document}

\title{On the moduli of K\"ahler-Einstein Fano manifolds}
\author{Yuji Odaka}
\address{Department of Mathematics, Kyoto university, Oiwake-cho, 
Kitashirakawa, Sakyo-ku, Kyoto city, Kyoto, 606-8285, JAPAN}
\email{yodaka@math.kyoto-u.ac.jp}

\maketitle
\thispagestyle{empty}

\begin{abstract}
We prove that 
K\"ahler-Einstein Fano manifolds with finite automorphism groups 
form Hausdorff moduli algebraic space with only quotient singularities. 
We also discuss the limits as $\mathbb{Q}$-Fano varieties 
which should be put on the boundary of its canonical compactification. 
\end{abstract}

\section{Introduction}

After the well-known construction of moduli of stable curves, 
recently the construction of 
moduli of canonically polarized varieties (thus of general type) of higher dimension 
is established by the works of Shepherd-Barron, Koll\'ar, Viehweg, Alexeev and 
subsequent authors, which depends on birational geometric techniques after the 
minimal model program, admitting \textit{semi-log-canonical} singularities. 

On the other hand, 
there are no general theory of constructing moduli of 
anti-canonically polarized varieties (i.e. of Fano type) nor of more general 
polarized varieties. 
In this exposition, we discuss a canonical way of constructing moduli 
using some differential geometry. That is, we will show the proof of the following theorem. 

\begin{Thm}\label{mod.sp}
K\"ahler-Einstein Fano $n$-folds with finite automorphism groups 
form Hausdorff moduli algebraic space in the sense of Artin \cite{Art71}, 
which is actually a (complex) orbifold. 
\end{Thm}

As originally expected, the existence of K\"ahler-Einstein metrics on a Fano manifold is proved to be equivalent to the K-stability. The proof of stability of KE Fano manifolds is established by \cite{Mab08}, \cite{Mab09}, \cite{Ber12} and for the converse direction, please consult \cite{CDS12a}, \cite{CDS12b}, \cite{CDS13}, \cite{Tia12b} which discuss after the Donaldson's proposal of a new continuity method using conical singular K\"ahler-Einstein metrics \cite{Don11}. Thanks to the equivalence, we can paraphrase 
as the following. 

\begin{Thm}[Re-statement]\label{mod.sp.K}
Smooth K-stable Fano varieties form separated moduli algebraic space 
and moreover we have only quotient singularities on it. 
\end{Thm}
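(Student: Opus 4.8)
The plan is to realize the moduli as a quotient of a locally closed subscheme of a suitable Hilbert scheme by a projective linear group, and then to extract its good properties from the differential-geometric theory of the K\"ahler-Einstein equation. First I would invoke boundedness: by the theorem of Koll\'ar-Miyaoka-Mori, smooth Fano $n$-folds form a bounded family, so by Matsusaka's big theorem there is a single $m=m(n)$ with $-mK_X$ very ample, having vanishing higher cohomology and a fixed Hilbert polynomial $P$, for every such $X$. Embedding by $|-mK_X|$ places all of them in a fixed $\mathbb{P}^N$, and I would let $H\subset \Hilb_P(\mathbb{P}^N)$ be the locally closed subscheme parametrizing those $[X\hookrightarrow\mathbb{P}^N]$ which are smooth and anticanonically embedded, so that $\mathcal{O}(1)|_X\cong -mK_X$. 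The group $G=\PGL_{N+1}$ acts on $H$, with orbits exactly the isomorphism classes and with the stabilizer of $[X]$ equal to $\Aut(X)$.

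Next I would cut out the open locus $H^{\mathrm{KE}}\subset H$ of K-stable (equivalently, by the equivalence recalled above, K\"ahler-Einstein) fibers. Openness here is the analytic input: for $X_0$ with finite $\Aut(X_0)$ there are no nonzero holomorphic vector fields, so the linearization of the K\"ahler-Einstein equation has no kernel and the implicit function theorem propagates the metric to nearby fibers, making $H^{\mathrm{KE}}$ open. On $H^{\mathrm{KE}}$ every stabilizer $\Aut(X)$ is finite, so the quotient stack $[H^{\mathrm{KE}}/G]$ has finite inertia and is Deligne-Mumford; by the Keel-Mori theorem, equivalently by Artin's representability criteria, it admits a coarse moduli algebraic space $M$, which is the desired space.

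The main obstacle is separatedness, and I expect to derive it from the uniqueness and convergence theory of K\"ahler-Einstein metrics rather than from birational geometry. Using the valuative criterion, I would take a family over a discrete valuation ring whose generic fiber is smooth K-stable and whose special fiber admits two smooth K\"ahler-Einstein completions $X_0,X_0'$, and argue that they are isomorphic: the K\"ahler-Einstein metrics on the nearby smooth fibers converge in the Gromov-Hausdorff sense, and since both limits $X_0,X_0'$ are smooth, Anderson-type $\varepsilon$-regularity for Einstein metrics upgrades this to smooth convergence with no bubbling, so $X_0$ and $X_0'$ are each isometric to the common limit; the uniqueness theorem of Bando-Mabuchi then forces a biholomorphism $X_0\cong X_0'$. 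Matching the metric Gromov-Hausdorff limit with the algebraic limit is the delicate point, and this is exactly where the smoothness of the limit is essential.

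Finally, for the orbifold structure I would pass to the local analytic model. Near $[X_0]\in M$ the moduli is described by the Kuranishi family of $X_0$ together with its $\Aut(X_0)$-action, and the K\"ahler-Einstein fibers form an $\Aut(X_0)$-invariant open subset; by the local moduli theory of Fujiki-Schumacher, in which finiteness of $\Aut(X_0)$ makes the moment map vanish transversally, this subset is a smooth complex manifold $B$. Hence $M$ is locally $B/\Aut(X_0)$, a finite quotient of a manifold, which is precisely a quotient singularity, so $M$ is a complex orbifold, completing the proof.
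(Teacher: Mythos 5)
The decisive gap is in your claim that the implicit function theorem ``makes $H^{\mathrm{KE}}$ open.'' The LeBrun--Simanca deformation argument only produces K\"ahler--Einstein metrics on fibers \emph{near} a given one in the Euclidean (classical) topology, so it yields openness of $H^{\mathrm{KE}}$ only in the analytic topology. An analytically open subset of $H$ need not be a subscheme, so you cannot form the quotient stack $[H^{\mathrm{KE}}/G]$ as an algebraic stack, and Keel--Mori does not apply; the whole construction of the moduli \emph{algebraic space} collapses at this point. This is precisely why the paper devotes its main technical effort (Theorem \ref{M.thm}) to proving \emph{Zariski} openness: it first shows the K-stable locus is \emph{constructible} --- via the reduction of K-stability to $K_m$-stability for a uniform $m$ (using uniform lower bounds on global log canonical thresholds, log K-stability, and the Chen--Donaldson--Sun results to bound the relevant degenerations), the piecewise-linearity and constructibility of the GIT weight functions of the CM line bundle with respect to a maximal torus, and Chevalley's lemma --- and only then combines constructibility with the Euclidean openness from \cite{LS94} to conclude Zariski openness (a constructible set that is open in the Euclidean topology is Zariski open). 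Your proposal contains no substitute for this constructibility step.

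A secondary point: for the orbifold structure, the transversality of the moment map in the Fujiki--Schumacher picture does not by itself give smoothness of the local base $B$; what is actually needed is unobstructedness of the complex deformations of $X_0$, i.e.\ $H^2(X_0,T_{X_0})=0$, which for Fano manifolds follows from the Akizuki--Nakano vanishing since $T_{X_0}=\Omega^{n-1}\otimes K_{X_0}^{-1}$. With that vanishing (and finiteness of the stabilizers) the Deligne--Mumford stack is smooth and the coarse space has quotient singularities; without it your local model $B/\Aut(X_0)$ could a priori have a singular $B$. The separatedness argument you give is essentially the paper's (uniqueness of Riemannian limits plus Bando--Mabuchi), and that part is fine.
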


\noindent 
At the first talk at Kinosaki symposium (2010), 
the author suggested more general conjecture (K-moduli) - to construct 
moduli of K-(poly)stable varieties - which we review and discuss at the final section. So in a sense, this is a progress report on the conjecture. 

This principle of constructing moduli of these varieties with canonical metrics has been  often hided but existed behind the whole history of moduli construction in a sense. 
To see a bit of the connection, note that K\"ahler-Einstein metrics exist on smooth projective curves, 
smooth canonical models, and Calabi-Yau manifolds 
due to the Aubin-Yau existence theory (\cite{Aub78}, \cite{Yau78}) while 
we know the existence of quasi-projective moduli of them by algebro-geometric independent work  \cite{Vie95}. 
What we discuss here is how to directly use K\"ahler-Einstein geometry for the construction of moduli as varieties or algebraic spaces. 

Personally speaking, 
the author's original motivation for this program of constructing moduli 
is from his algebro-geometric observations on the K-stability \cite{Od08}, 
\cite{Od11}, which connected 
geometrically compactified moduli variety of general type varieties of the Koll\'ar-Shepherd-Barron-Alexeev type (cf. e.g. \cite{Kol10}) and the K-stability \cite{Don02}. 
As originally expected, the existence of K\"ahler-Einstein metrics on a Fano manifold is proved to be equivalent to the K-stability. The proof of stability of KE Fano manifolds is established by \cite{Mab08}, \cite{Mab09}, \cite{Ber12} and for the converse direction, please consult \cite{CDS12a}, \cite{CDS12b}, \cite{CDS13}, \cite{Tia12b} which discuss after the Donaldson's proposal of a new continuity method using conical singular K\"ahler-Einstein metrics \cite{Don11}. 

While the equivalence is now established, the K-polystability (or the existence of K\"ahler-Einstein metric) for a given or a concrete Fano manifold, and the behaviour with respect to complex deformation have both certain independent aspects and they are difficult interesting questions in general which remain. To that question, we can prove 
the following. 

\begin{Thm}\label{M.thm}
For a flat projective family of Fano manifolds $\pi \colon \mathcal{X}\rightarrow S$,
then the subset: 

\begin{equation*}
\{ s\in S\mid \mathcal{X}_{s} \mbox{ is a KE Fano manifold with } \#\Aut(X)<\infty \} \subset S
\end{equation*} is a Zariski open subset of $S$. 
\end{Thm}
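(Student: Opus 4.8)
The plan is to separate the two defining conditions and then reduce the statement to an interplay between the analytic (Euclidean) and Zariski topologies. First I would treat the finite-automorphism condition: since $\dim \Aut(\mathcal{X}_s)$ equals $h^0(\mathcal{X}_s, T_{\mathcal{X}_s})$, and $h^0$ is upper semicontinuous on $S$ for the flat family $\pi$, the locus
\begin{equation*}
\{ s\in S\mid \#\Aut(\mathcal{X}_{s})<\infty \}=\{ s\in S\mid h^{0}(\mathcal{X}_{s},T_{\mathcal{X}_{s}})=0\}
\end{equation*}
is Zariski open. Replacing $S$ by this open subset, it remains to prove that the KE locus is Zariski open; by the equivalence of the existence of a K\"ahler-Einstein metric with K-stability recalled above (finiteness of $\Aut$ corresponding to strict K-stability), this KE locus coincides with the K-stable locus $U\subseteq S$.

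The key structural observation I would exploit is the following elementary fact: a constructible subset of a complex variety which is open in the Euclidean topology is automatically Zariski open. This holds because the Euclidean closure and the Zariski closure of a constructible set coincide, so that if $U$ is constructible and $S\setminus U$ is Euclidean-closed, then $S\setminus U$ is Zariski-closed. Thus it suffices to establish that $U$ is (i) open in the Euclidean topology and (ii) constructible.

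For step (i) I would argue by deformation theory, and this is exactly where finiteness of the automorphism group is used. Fix $s_{0}\in U$ together with a K\"ahler-Einstein metric on $\mathcal{X}_{s_{0}}$. Since $H^{0}(\mathcal{X}_{s_{0}},T_{\mathcal{X}_{s_{0}}})=0$, the linearization of the K\"ahler-Einstein (Monge-Amp\`ere) equation has trivial kernel and cokernel, so by the implicit function theorem the equation remains solvable on all fibers $\mathcal{X}_{s}$ with $s$ in a Euclidean neighborhood of $s_{0}$; these fibers again satisfy $h^{0}(T)=0$ by semicontinuity and hence lie in $U$. This is the standard openness step of the continuity method, and it yields Euclidean openness of $U$.

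The main obstacle is step (ii), the constructibility of $U$. Here I would invoke the algebraic nature of K-stability together with boundedness: $\pi$ is a bounded family of Fano manifolds, so the test configurations capable of violating K-stability can be organized into families of bounded type, and the resulting conditions on $S$ should be algebraic, so that $U$ is cut out by constructible conditions. Alternatively, one can try to use the classifying map to the moduli algebraic space of Theorem~\ref{mod.sp} together with Artin's criteria to deduce that $U$ is constructible. I expect this to be the delicate point, since a priori K-stability is an infinite family of numerical conditions; controlling it by finitely many constructible strata is the real content. Combining (i), (ii) and the above fact then gives that $U$ is Zariski open, completing the proof.
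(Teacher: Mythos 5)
Your overall architecture is the same as the paper's: decompose the problem into (i) Euclidean openness, proved by LeBrun--Simanca type deformation theory of cscK metrics using $H^{0}(T_{X})=0$, and (ii) constructibility, and then conclude Zariski openness from the coincidence of Euclidean and Zariski closures of constructible sets. Step (i) is essentially what the paper does (it cites \cite{LS94}, after passing to a resolution of singularities of $S$). But step (ii) is where the entire content of the theorem lies, and your proposal does not actually prove it --- you correctly flag it as ``the real content'' and then offer only the hope that boundedness makes the conditions algebraic. That is a genuine gap, and it is worth seeing concretely why it is nontrivial and how the paper fills it. K-stability is a priori a condition on test configurations of \emph{all} exponents $m$, so before one can even speak of a single Hilbert scheme and finitely many strata one must find a single $m$, uniform over $S$, such that $\mathcal{X}_{s}$ is KE if and only if it is $K_{m}$-stable. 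The paper obtains this by a chain of results that your sketch does not touch: uniform positive lower bounds on the global log canonical thresholds of the fibers and of divisors $\mathcal{D}_{s}\in|-\lambda K_{\mathcal{X}_{s}}|$ (Proposition \ref{lower.bounds}), which via \cite{OS11} give a uniform cone angle $2\pi\beta_{\infty}$ at which every fiber is log K-stable; then the Chen--Donaldson--Sun theorem (Theorem \ref{log.GH}) producing, for a non-KE fiber, a destabilizing test configuration whose central fiber is a $\mathbb{Q}$-Fano variety of \emph{uniformly bounded} Gorenstein index; and the boundedness result of \cite{HMX12} to put all such central fibers in one fixed Hilbert scheme. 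Without this uniformity the reduction to finitely many constructible conditions cannot start.

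Even after fixing $m$, one still faces infinitely many one-parameter subgroups. The paper handles this by restricting to a maximal torus $T$ and invoking Mumford's piecewise-linearity of GIT weight functions on $\mathrm{Hom}(\mathbb{G}_m,T)\otimes\mathbb{R}$, extended to the CM line bundle $\Lambda_{CM}$ (which is not ample) by writing its weight as $\lambda(h;-;\Lambda_{CM}\otimes\Lambda_{Pl}^{\otimes l})-l\,\lambda(h;-;\Lambda_{Pl})$ for a Pl\"ucker polarization $\Lambda_{Pl}$; this yields finitely many linear functions and a constructible stratification of the Hilbert scheme, and Chevalley's lemma applied to the action map $\SL\times H\to H$ then passes from $T$ to the full group. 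None of this is replaced by your appeal to ``families of bounded type.'' Finally, your alternative suggestion to use the classifying map to the moduli space of Theorem \ref{mod.sp} is circular: in the paper Theorem \ref{mod.sp} is \emph{deduced from} Theorem \ref{M.thm}, so the moduli space is not yet available at this stage.
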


\noindent
Note that this Zariski openness question is also discussed in 
Donaldson's informal document \cite{Don09} 
which takes a different approach under several conjectural hypotheses. 
Indeed, some discussion with Donaldson on 
this question is also reflected in this notes as we will explain at each point. 
A fundamental remark is that this Zariski openness does \textit{not} hold without the  condition of discreteness of automorphism group. 

Coming back to the moduli construction, 
the remained problem after Theorem \ref{mod.sp}, \ref{mod.sp.K} 
is to construct the compactification of them by attaching 
\textit{Gromov-Hausdorff limits}. Indeed the following result established 
those algebraicity. 

\begin{Thm}\label{DS12}
Gromov-Hausdorff limit of K\"ahler-Einstein Fano $n$-folds is a KE 
$\mathbb{Q}$-Fano variety, i.e. log-terminal Fano varieties with a singular K\"ahler-Einstein metric. 
\end{Thm}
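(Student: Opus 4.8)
The plan is to realize the Gromov-Hausdorff limit as a projective variety by producing enough pluri-anticanonical sections, and then to read off its singularity type and its limiting metric from Cheeger-Colding structure theory. Start with a sequence of K\"ahler-Einstein Fano $n$-folds $(X_i,\omega_i)$ normalized so that $\mathrm{Ric}(\omega_i)=\omega_i$; then $[\omega_i]=2\pi c_1(X_i)$ and the volume $\int_{X_i}\omega_i^n=(2\pi)^n c_1(X_i)^n$ is a fixed topological number along the family. Gromov's precompactness theorem, together with the Bonnet-Myers diameter bound coming from the positive Ricci lower bound, gives a compact Gromov-Hausdorff limit $Z$ after passing to a subsequence. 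The fixed volume forces non-collapsing, so Cheeger-Colding theory applies: $Z$ decomposes as a disjoint union of a regular part $\mathcal{R}$, which is a smooth open K\"ahler-Einstein manifold, and a singular part $\mathcal{S}$ of real Hausdorff codimension at least $4$, along which every tangent cone is a metric cone.

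First I would establish the uniform partial $C^0$-estimate: there is a fixed positive integer $k$ and a constant $b>0$, independent of $i$, so that the Bergman kernel $\rho_{i,k}$ of $(-kK_{X_i},\omega_i)$ satisfies $\rho_{i,k}\geq b$ everywhere. The method is H\"ormander's $L^2$-estimate for $\bar\partial$: near any point one constructs an approximately holomorphic section of $-kK_{X_i}$ that is strongly concentrated, using a cut-off supported where the geometry is close to that of the tangent cone, and then corrects it to a genuine holomorphic section while retaining a uniform lower bound on its norm. The fact that the tangent cones are affine algebraic and carry a dilation action is exactly what makes the concentration estimate uniform across the sequence, and this is the crux of the whole argument.

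Granting the estimate, the sections of $-kK_{X_i}$ give embeddings $\Phi_i\colon X_i\hookrightarrow \mathbb{P}^{N}$ with $N$ fixed, whose images $W_i$ have uniformly bounded degree. By boundedness of the Hilbert scheme the $W_i$ subconverge to a projective variety $W\subset\mathbb{P}^N$, and one checks that the Gromov-Hausdorff convergence is compatible with the algebraic convergence, so $Z$ is homeomorphic to $W$ and $\mathcal{R}$ is identified with $W_{\mathrm{reg}}$. Normality of $W$ follows because the complement of $\mathcal{R}$ has complex codimension at least $2$, so local holomorphic functions extend across it by Hartogs. Finally, the limiting metric is a genuine K\"ahler-Einstein metric on $W_{\mathrm{reg}}$ with bounded local potentials; its finite total volume and the integrability of the Monge-Amp\`ere density near $\mathcal{S}$ translate, via the correspondence between singular K\"ahler-Einstein metrics and log-terminal singularities in the sense of Eyssidieux-Guedj-Zeriahi, into the statement that $W$ is $\mathbb{Q}$-Fano with log-terminal singularities and that the metric solves the K\"ahler-Einstein equation weakly.

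The main obstacle is unquestionably the uniform partial $C^0$-estimate. Producing holomorphic sections by H\"ormander's technique requires pointwise control of the K\"ahler potential and a Poincar\'e-type inequality with constants independent of $i$, and this is precisely what is delicate near the singular set where the manifolds degenerate. One must play the Cheeger-Colding regularity theory against the complex-analytic construction of sections, and it is here that the detailed understanding of the tangent cones, their algebraic structure, and the gluing of local models becomes indispensable.
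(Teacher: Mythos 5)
You should first be aware that the paper does not prove this statement at all: Theorem \ref{DS12} is imported verbatim from Donaldson--Sun \cite{DS12}, and the text simply refers the reader there. So the comparison is really between your outline and the argument of \cite{DS12}. Your outline does reproduce the correct architecture of that argument: non-collapsed Gromov precompactness, the Cheeger--Colding regular/singular decomposition with singular set of real codimension at least $4$, the uniform partial $C^0$-estimate via H\"ormander's $L^2$-method, the resulting uniform Tian embeddings into a fixed $\mathbb{P}^N$, convergence in the Hilbert scheme, normality from codimension two plus Hartogs, and log-terminality read off from the finite-volume weak K\"ahler--Einstein metric in the sense of Eyssidieux--Guedj--Zeriahi.

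The genuine gap is at the point you yourself flag as the crux, and it is not merely a matter of unexecuted estimates. In your construction of peaked sections you take as given that ``the tangent cones are affine algebraic and carry a dilation action,'' and you use this to make the concentration estimate uniform across the sequence. But in \cite{DS12} the algebraicity of the tangent cones is not an input available from Cheeger--Colding theory; it is one of the main outputs, established by a delicate induction on the dimension of the strata of the singular set, interleaved with the $L^2$-construction of sections at each stage, and the uniformity of the lower bound $b$ on the Bergman function is then extracted by a separate compactness-and-contradiction argument over the whole moduli of such limits (this is where the fixed volume and fixed dimension enter to bound $k$ and $N$ uniformly). As written, your argument is circular at this step: you need the algebraic structure of the cones to build the sections, and you need the sections to exhibit the algebraic structure. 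A second, smaller gap is the sentence ``one checks that the Gromov--Hausdorff convergence is compatible with the algebraic convergence'': the identification of the metric limit $Z$ with the flat limit $W$ (in particular that $W$ is reduced, irreducible, and that no component collapses or escapes) is itself a nontrivial intertwining argument in \cite{DS12}, not a routine verification.
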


\noindent 
Note that the meaning of Gromov-Hausdorff convergence is enhanced there 
to encode the continuity of complex structures. See \cite{DS12} for the details. 

In the case of Del Pezzo surfaces, the above algebraicity was established by Tian two decades ago \cite{Tia90}. In that case, we succeeded in the construction of the compactification in explicit way in the joint work with Cristiano Spotti and Song Sun 
\cite{OSS12}. This is pioneered by Mabuchi-Mukai \cite{MM93} where degree $4$ case 
is written. 

\section{Proofs}

\begin{proof}[Proof of Theorem \ref{M.thm}]

To state the proof, we need to recall the following characterization 
of test configurations and introduce a certain modification of the K-stability notion. 

\begin{Prop}[{\cite[Proposition 3.7]{RT07}}]\label{tc.1-ps}

Let $X$ be a Fano manifold. 
Then for a large enough $m\in \mathbb{Z}_{>0}$, 
a one-parameter subgroup of $\GL(H^{0}(X,\mathcal{O}_{X}(-mK_X)))$ is equivalent to the data of a test configuration 
$(\mathcal{X},\mathcal{L})$ of $(X,-K_X)$ 
whose polarization $\mathcal{L}$ is very ample  (over $\mathbb{A}^{1}$) 
with exponent $m$. 

\end{Prop}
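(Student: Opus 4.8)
The plan is to set up an explicit bijection between one-parameter subgroups and exponent-$m$ test configurations, organizing both directions around the projective embedding furnished by the complete linear system $|-mK_{X}|$. First I would fix $m$ large enough that $-mK_{X}$ is very ample and $H^{i}(X,\mathcal{O}_{X}(-mK_{X}))=0$ for all $i>0$; by Serre vanishing together with the ampleness of $-K_{X}$ such $m$ exist. Writing $V=H^{0}(X,\mathcal{O}_{X}(-mK_{X}))$, very ampleness gives a closed embedding $X\hookrightarrow \mathbb{P}(V^{*})$, and the vanishing will be used repeatedly to control pushforwards by cohomology and base change.

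For the direction from a one-parameter subgroup to a test configuration: given $\lambda\colon \mathbb{G}_{m}\to \GL(V)$, I would let $\mathbb{G}_{m}$ act on $\mathbb{A}^{1}\times\mathbb{P}(V^{*})$ through the standard scaling on $\mathbb{A}^{1}$ and through $\lambda$ on $\mathbb{P}(V^{*})$, and define $\mathcal{X}$ to be the scheme-theoretic closure of the orbit $\{(t,\lambda(t)\cdot x):t\in\mathbb{G}_{m},\ x\in X\}$. Its structure sheaf is $\mathcal{O}_{\mathbb{A}^{1}}$-torsion-free, hence $\mathcal{X}$ is flat over the smooth curve $\mathbb{A}^{1}$; the general fibre is $\lambda(t)\cdot X\cong X$, and $\mathcal{X}$ inherits the $\mathbb{G}_{m}$-action. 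Setting $\mathcal{L}=\mathcal{O}_{\mathbb{P}(V^{*})}(1)|_{\mathcal{X}}$ yields a relatively very ample, $\mathbb{G}_{m}$-linearised polarization restricting to $\mathcal{O}_{X}(-mK_{X})$ on each nonzero fibre, i.e. one of exponent $m$.

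Conversely, starting from a test configuration $(\mathcal{X},\mathcal{L})$ with $\mathcal{L}$ very ample of exponent $m$, I would form $E=\pi_{*}\mathcal{L}$. Flatness of $\pi$ together with the fibrewise vanishing arranged above lets cohomology and base change identify $E$ as a locally free sheaf on $\mathbb{A}^{1}$ of rank $\dim V$ with $E_{t}=H^{0}(\mathcal{X}_{t},\mathcal{L}_{t})$; in particular $E_{1}=V$. The $\mathbb{G}_{m}$-linearization of $\mathcal{L}$ over the scaling action on $\mathbb{A}^{1}$ endows $E$ with an equivariant structure, and since that action contracts $\mathbb{A}^{1}$ to $0$, the bundle $E$ splits equivariantly as a sum of weight eigenbundles $\bigoplus_{a}V_{a}\otimes t^{a}$. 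Restricting to $t=1$ exhibits $V=\bigoplus_{a}V_{a}$ as a weighted decomposition, i.e. a one-parameter subgroup of $\GL(V)$; relative very ampleness embeds $\mathcal{X}\hookrightarrow \mathbb{P}(E^{*})\cong\mathbb{A}^{1}\times\mathbb{P}(V^{*})$ equivariantly, recovering precisely the orbit-closure family of the preceding paragraph.

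Finally I would check that these two assignments are mutually inverse, so that the correspondence is a genuine equivalence. I expect the main obstacle to be the cohomology-and-base-change step in the reverse direction: one must guarantee that $\pi_{*}\mathcal{L}$ is locally free of the expected constant rank even across the possibly singular central fibre $\mathcal{X}_{0}$, which is exactly what the uniform vanishing $H^{i}(X,\mathcal{O}_{X}(-mK_{X}))=0$ secured by the choice of large $m$ is designed to provide. One must also verify that the scheme-theoretic flat limit computed by the orbit closure coincides with the central fibre read off from the equivariant splitting of $E$, so that the round trip is the identity.
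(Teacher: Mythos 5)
The paper offers no proof of this proposition; it is quoted verbatim from \cite[Proposition 3.7]{RT07}, and your argument is essentially the standard one given there: orbit closure in $\mathbb{A}^{1}\times\mathbb{P}(V^{*})$ in one direction, equivariant splitting of $\pi_{*}\mathcal{L}$ over $\mathbb{A}^{1}$ in the other. So the overall route is right.

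The one step that does not work as you state it is the cohomology-and-base-change argument at the central fibre. The vanishing $H^{i}(X,\mathcal{O}_{X}(-mK_{X}))=0$ controls only the fibres over $t\neq 0$; semicontinuity gives $h^{i}(\mathcal{X}_{0},\mathcal{L}_{0})\geq h^{i}(\mathcal{X}_{t},\mathcal{L}_{t})$, i.e.\ the inequality in the wrong direction, and since $\mathcal{X}_{0}$ can be an arbitrarily bad (non-reduced, non-normal) degeneration depending on the test configuration, no choice of $m$ made in advance on $X$ alone forces $H^{i}(\mathcal{X}_{0},\mathcal{L}_{0})=0$. Fortunately the step you want it for does not require it: $\pi_{*}\mathcal{L}$ is a coherent sheaf on the smooth curve $\mathbb{A}^{1}$ which is torsion-free (because $\mathcal{X}$ is flat over $\mathbb{A}^{1}$, so $\mathcal{L}$ has no $\mathcal{O}_{\mathbb{A}^{1}}$-torsion), hence automatically locally free, of rank equal to its generic rank $h^{0}(X,-mK_{X})=\dim V$; and the relative very ampleness assumed in the statement already gives the closed immersion $\mathcal{X}\hookrightarrow\mathbb{P}\bigl((\pi_{*}\mathcal{L})^{\vee}\bigr)$ without any identification of the fibre $(\pi_{*}\mathcal{L})\otimes k(0)$ with $H^{0}(\mathcal{X}_{0},\mathcal{L}_{0})$. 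With that substitution the equivariant trivialization $\pi_{*}\mathcal{L}\cong\bigoplus_{a}V_{a}\otimes t^{a}$, the induced one-parameter subgroup of $\GL(V)$, and the round-trip check via uniqueness of flat limits over a smooth curve all go through as you describe.
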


\begin{Def}
A $\mathbb{Q}$-Fano variety $X$ is said to be $K_{m}$-stable (resp., 
$K_m$-semistable) if and only if 
$\DF(\mathcal{X},\mathcal{L})>0$ for any test configuration of the above type (i.e., 
with very ample $\mathcal{L}$ of exponent $m$) which is 
non-trivial test configuration whose central fiber is normal. 
\end{Def}

Notice that what \cite{LX11} proved can be rephrased as 
that a $\mathbb{Q}$-Fano variety is 
K-stable (resp., K-polystable, K-semistable) if and only if it is 
$K_{m}$-stable (resp., $K_m$-polystable, $K_m$-semistable) 
for sufficiently divisible $m\in \mathbb{Z}_{>0}$. 
The name ``$K_m$-stability" followed a suggestion of 
Professor Simon Donaldson in a discussion. 
The equivalent notion was also treated in \cite{Tia12a}.  

Let us also recall  the definition of the algebro-geometric counterpart of the  $\alpha$-invariant (\cite{Tia87}) i.e., the so-called \textit{global log-canonical threshold}  (\cite{Sho92}) and its variants as follows. 

\begin{Def}
Suppose $X$ is a projective manifold, $E$ is an effective $\mathbb{Q}$-divisor on $X$, 
and $L$ is an ample line bundle on $X$. Then we consider the following invariants. 

$${\rm lct}(X,E):={\rm sup}\{ \alpha>0 \mid (X,\alpha E) \mbox{ is log canonical} \}, $$

$${\rm lct}_l(X;L):={\rm inf}{_{D\in |lL|}}
{\rm lct}\big(X, \frac{1}{l}D \big) \mbox{ for } l \in \mathbb{Z}_{>0}, $$

and 
$${\rm glct}(X;L):={\rm inf}{_{l\in\mathbb{Z}_{>0}}}
{\rm lct}_l(X;L), $$
which are called the \textit{log canonical threshold} of $X$ with respect to $E$, 
the \textit{log canonical threshold} of $X$ with respect to $L$ with exponent $l$, 
and the \textit{global log canonical threshold}  of $X$ with respect to $L$. 

For a Fano manifold $X$, we simply write 
${\rm glct}(X):={\rm glct}(X;-K_X)$, $\lct_l(X):=\lct_l(X;-K_X)$. 
\end{Def}

According to the approximation theory of K\"ahler metrics by Bergman metrics, 
the last notion corresponds to $\alpha$-invariant (cf.\ \cite{Tia87}, 
\cite[Appendix]{CSD08}). 

We use the following general proposition in what follows. 

\begin{Prop}\label{lower.bounds}
Given a flat family of Fano manifolds $\mathcal{X}\rightarrow S$ and its 
relative pluri-anticanonical divisors $\mathcal{D}\in |-\lambda K_{\mathcal{X}/S}|$, 
we have uniform positive lower bounds for both $\lct(\mathcal{X}_s;-K_{\mathcal{X}_s})$ 
and $\lct(\mathcal{D}_s;-K_{\mathcal{X}_s}|_{\mathcal{D}_s})$ 
where $s$ runs through $S$. 
\end{Prop}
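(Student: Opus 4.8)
The plan is to treat the two bounds separately, beginning with the global log canonical threshold $\mathrm{glct}(\mathcal{X}_s;-K_{\mathcal{X}_s})$ of the smooth fibre, which is the algebraic counterpart of Tian's $\alpha$-invariant and the quantity to which the statement refers. First I would fix $m$ so that $-mK_{\mathcal{X}/S}$ is relatively very ample; such an $m$ exists uniformly since $S$ is Noetherian and $-K_{\mathcal{X}/S}$ is relatively ample. For any $l$, any $D\in|-lK_{\mathcal{X}_s}|$, and any point $x\in\mathcal{X}_s$, intersecting $D$ with a general complete-intersection curve $C$ through $x$ cut out by $n-1$ members of $|-mK_{\mathcal{X}_s}|$ gives
\[
\mult_x D \;\le\; D\cdot C \;=\; l\,m^{n-1}\,(-K_{\mathcal{X}_s})^{n}.
\]
Since $\mathcal{X}_s$ is smooth, $\lct\bigl(\mathcal{X}_s,\tfrac1l D\bigr)\ge 1/\mult_x(\tfrac1l D)=l/\mult_x D$ at every $x$, whence $\lct_l(\mathcal{X}_s)\ge 1/\bigl(m^{n-1}(-K_{\mathcal{X}_s})^{n}\bigr)$, a bound independent of $l$. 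Taking the infimum over $l$ gives the same bound for $\mathrm{glct}(\mathcal{X}_s;-K_{\mathcal{X}_s})$, and because $(-K_{\mathcal{X}_s})^{n}$ is locally constant by flatness and so takes finitely many values over $S$, this lower bound is uniformly positive. This settles the first bound with no further input.

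For the second bound the difficulty is that the fibres $\mathcal{D}_s$ of the fixed relative divisor are themselves singular, possibly non-normal, so the naive multiplicity estimate fails along the singular locus and one cannot simply repeat the argument with $\mathcal{D}_s$ in place of $\mathcal{X}_s$. My plan is to reduce to a controlled situation by Noetherian induction on $S$ together with a simultaneous embedded resolution. Writing $L_s:=(-K_{\mathcal{X}_s})|_{\mathcal{D}_s}$ and choosing a log resolution $g\colon\mathcal{W}\to\mathcal{X}$ of the pair $(\mathcal{X},\mathcal{D})$, there is a dense open $S^{\circ}\subseteq S$ over which $g$ restricts to a simultaneous log resolution of $(\mathcal{X}_s,\mathcal{D}_s)$; over $S^{\circ}$ the discrepancies $a(E_i;\mathcal{X}_s)$ and the orders $\mathrm{ord}_{E_i}(\mathcal{D}_s)$ are constant, and the relevant intersection numbers are locally constant by flatness. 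Over $S^{\circ}$ I would lift a divisor $D'\in|lL_s|$ to $\tilde D'\in|-lK_{\mathcal{X}_s}|$ with $\tilde D'|_{\mathcal{D}_s}=D'$ (the restriction map is surjective for $l>\lambda$ by Kodaira vanishing, the finitely many small $l$ being absorbed into the induction) and compare $\lct(\mathcal{D}_s,\tfrac1l D')$ with the threshold of the ambient pair $(\mathcal{X}_s,\mathcal{D}_s+\tfrac1l\tilde D')$ along $\mathcal{D}_s$ via inversion of adjunction, where the different $\Diff$ records the singularities of $\mathcal{D}_s$ but is itself governed by the constant combinatorial data of the simultaneous resolution.

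The crux — and the step I expect to be the main obstacle — is an Izumi-type estimate on the singular fibre: a uniform bound $\mathrm{ord}_E(D')\le C\cdot l\cdot A_{(\mathcal{X}_s,\mathcal{D}_s)}(E)$ for every divisorial valuation $E$ whose centre meets $\mathcal{D}_s$, with $C$ independent of $s\in S^{\circ}$, of $l$, and of $D'\in|lL_s|$, where $A_{(\mathcal{X}_s,\mathcal{D}_s)}(E)$ denotes the log discrepancy. On the smooth fibre this is exactly the multiplicity computation of the first part; on the singular $\mathcal{D}_s$ its uniformity is the real content, and I would extract it from the constancy of discrepancies and of the bounding intersection numbers over $S^{\circ}$, using that the log discrepancy of $(\mathcal{X}_s,\mathcal{D}_s)$ is bounded below on $S^{\circ}$ by the simultaneous resolution. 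Granting the estimate, $\lct(\mathcal{D}_s,\tfrac1l D')\ge 1/C$ uniformly on $S^{\circ}$, and Noetherian induction applied to the finitely many residual strata of $S\setminus S^{\circ}$ — each again a flat family to which the same argument applies — yields a single positive lower bound valid for all $s\in S$. Taking the minimum of the two bounds completes the proof.
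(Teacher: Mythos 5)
Your treatment of the first bound is exactly the paper's argument: the inequality $\lct(X,D)\ge 1/\mult_p(D)$ at smooth points, combined with the bound $\mult_x(D)\le D\cdot C$ for a general complete-intersection curve $C$ through $x$ cut out by very ample pluri-anticanonical members, giving a bound independent of the exponent $l$ and, by flatness and Noetherianity of $S$, of $s$. That half is correct and needs no further comment.

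The second half contains a genuine gap, and it sits precisely where you flagged it. The uniform Izumi-type estimate $\mathrm{ord}_E(D')\le C\cdot l\cdot A_{(\mathcal{X}_s,\mathcal{D}_s)}(E)$ for \emph{every} divisorial valuation $E$ is essentially the statement you are trying to prove on the fibre $\mathcal{D}_s$, and it does not follow from the constancy over $S^{\circ}$ of the discrepancies and orders along the finitely many exceptional divisors of one fixed simultaneous log resolution of $(\mathcal{X},\mathcal{D})$: the infimum defining $\lct(\mathcal{D}_s,\tfrac1l D')$ runs over valuations adapted to the varying divisors $D'$, which are not computed by that fixed resolution. As written, ``I would extract it from the constancy of discrepancies'' restates the problem rather than solving it. The detour is also unnecessary for the statement as the paper uses it: the divisor to which the proposition is applied is the \emph{smooth} pluri-anticanonical divisor of Corollary \ref{w.weak.alpha}, and the intended proof of the second bound is simply the first argument run again on $\mathcal{D}_s$ itself --- for $p\in\mathcal{D}_s$ and an effective divisor $D'$ on $\mathcal{D}_s$ with $D'\sim l\,(-K_{\mathcal{X}_s})|_{\mathcal{D}_s}$, cut by $n-2$ general members of the very ample restricted system $\bigl|-\lambda K_{\mathcal{X}_s}\bigr|$ on $\mathcal{D}_s$ through $p$ to get $\mult_p(D')\le \lambda^{n-1}l\,(-K_{\mathcal{X}_s})^{n}$, then apply $\lct\ge 1/\mult$ at the smooth point $p$. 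If you insist on allowing genuinely singular or non-normal $\mathcal{D}_s$, you must actually prove the uniform Izumi estimate (a nontrivial task, and not what the paper does), or else restrict to the smooth case where the elementary multiplicity argument closes the proof.
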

\begin{proof}[proof of Proposition \ref{lower.bounds}]
First, let us recall the following basic lemma. 
\begin{Lem}
For any log pair $(X,D)$ with $\mathbb{Q}$-Cartier effective $\mathbb{Q}$-divisor $D$,
$$\lct(X,D)\geq 1/{\it mult}_p(D),$$ 
where if $D$ is an effective $\mathbb{Q}$-divisor $\sum a_i D_i$ with prime divisors $D_i$ and $a_i\in \mathbb{Q}_{>0}$, ${\it mult}_p(D):=\sum a_i {\it mult}_p(D_i)$. 
\end{Lem} 
\noindent
This Lemma follows from 
the upper semicontinuity of the log canonical thresholds (cf. e.g., \cite{Mus02}), 
applied to a degenerating family of $D$ to a multiple of a smooth local divisor 
(cf. e.g., \cite{Kol97} for more details). 

On the other hand, supposing $-\lambda K_{X}$ is very ample for 
every Fano $n$-fold $X$, we have 
$$
{\it mult}_p(D)\leq \lambda^{n}(-K_X)^{n} 
$$
for all $p\in D$ with $D\in |-\lambda K_X|$. 
Indeed, if we take general members $D_1, \cdots, D_{n-1}$ among 
$|-\lambda K_X|$ which pass through the point $p\in X$ without singularity at $p$, 
the inequality follows from the standard estimate of the intersection number 
$(D_1.\dots.D_{n-1}.D)=\lambda^{n}(-K_X)^{n}$. 
The proof of Proposition \ref{lower.bounds} is completed. 
\end{proof}

On the other hand, let us recall the following results from \cite{OS11}. 

\begin{Thm}[{\cite[Theorem 4.1]{OS11}}]\label{CY.gen}

Let $X$ be a projective variety, $D'$ is an effective $\mathbb{Q}$ divisor and 
$0\leq \beta\leq 1$. 
{\rm (i)} Assume $(X, (1-\beta)D)$ is a log Calabi-Yau pair, i.e., $K_X+(1-\beta)D$ is numerically equivalent to  the zero divisor and it is a semi-log-canonical pair (resp.\ kawamata-log-terminal pair). Then, $((X,D),L)$ is logarithmically K-semistable (resp.\ logarithmically K-stable) with respect to the boundary's parameter $\beta$. 

{\rm (ii)} Assume $(X,(1-\beta)D')$ is a semi-log-canonical model, i.e., $K_X+(1-\beta)D'$ is ample and  it is a semi-log-canonical pair. Then, $((X,D),K_X+(1-\beta)D')$ and $\beta \in \mathbb{Q}_{>0}$ is log K-stable with respect to the boundary's parameter $\beta$. 

\end{Thm}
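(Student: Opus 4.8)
The plan is to prove both parts by the discrepancy method for the Donaldson--Futaki invariant, reducing log $K$-(semi)stability to a sign computation for a single intersection number. First I would reduce, exactly as in the definition of $K_m$-stability above, to normal test configurations $(\mathcal X,\mathcal L)$ of $(X,L)$ with normal central fibre; after normalizing the total space I would compactify over $\mathbb P^1$ to obtain $(\bar{\mathcal X},\bar{\mathcal L})\to\mathbb P^1$ with $\bar{\mathcal L}$ relatively nef, and let $\bar{\mathcal D}\subset\bar{\mathcal X}$ be the closure of $D\times\mathbb A^1$. The starting point is the intersection-number expression for the logarithmic Donaldson--Futaki invariant,
\begin{equation*}
\DF_\beta(\mathcal X,\mathcal D,\mathcal L)=\frac{1}{2(n+1)V}\Bigl((n+1)\,\bar{\mathcal L}^n\!\cdot\!\bigl(K_{\bar{\mathcal X}/\mathbb P^1}+(1-\beta)\bar{\mathcal D}\bigr)+n\,\mu_\beta\,\bar{\mathcal L}^{\,n+1}\Bigr),
\end{equation*}
where $V=L^n$ and $\mu_\beta=-\bigl(K_X+(1-\beta)D\bigr)\!\cdot\!L^{n-1}/L^n$ is the log slope.

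For part (i) the log Calabi--Yau hypothesis $K_X+(1-\beta)D\equiv0$ gives $\mu_\beta=0$, so the formula collapses to $\DF_\beta=\tfrac{1}{2V}\,\bar{\mathcal L}^n\!\cdot\!\bigl(K_{\bar{\mathcal X}/\mathbb P^1}+(1-\beta)\bar{\mathcal D}\bigr)$, and everything reduces to showing this number is $\geq0$, with strict positivity in the klt case for nontrivial configurations. Here I would compare $\bar{\mathcal X}$ with the trivial family: on a common log resolution $p\colon\mathcal W\to\bar{\mathcal X}$, $q\colon\mathcal W\to X\times\mathbb P^1$, the two relative log canonical divisors agree on the generic fibre, so their difference is a combination $\sum_i a_i E_i$ of exceptional and special-fibre components, where the $a_i$ are discrepancies of the pair $\bigl(X\times\mathbb P^1,(1-\beta)(D\times\mathbb P^1)+X\times\{0\}\bigr)$. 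Pairing with $(p^*\bar{\mathcal L})^n$ and applying the projection formula, the $p$-exceptional terms drop out and only the central-fibre components of $\bar{\mathcal X}$ survive, each weighted by the nonnegative number $\bar{\mathcal L}^n\!\cdot\!(\text{effective divisor})$. Because $(X,(1-\beta)D)$ is semi-log-canonical (resp.\ klt), the corresponding product pair is semi-log-canonical (resp.\ purely log terminal), its discrepancies along the surviving components are $\geq0$ (resp.\ $>0$), and the inequality follows; the equality case then identifies $\DF_\beta=0$ with triviality of the configuration.

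Part (ii) is the canonically polarized analogue. With $L=K_X+(1-\beta)D'$ ample one has $\mu_\beta=-1$, so
\begin{equation*}
\DF_\beta=\frac{1}{2(n+1)V}\Bigl((n+1)\,\bar{\mathcal L}^n\!\cdot\!\bigl(K_{\bar{\mathcal X}/\mathbb P^1}+(1-\beta)\bar{\mathcal D}\bigr)-n\,\bar{\mathcal L}^{\,n+1}\Bigr).
\end{equation*}
Since both $\bar{\mathcal L}$ and $K_{\bar{\mathcal X}/\mathbb P^1}+(1-\beta)\bar{\mathcal D}$ restrict to the same ample class on the generic fibre, their difference $\mathcal E$ is supported on the central fibre; I would expand $\bar{\mathcal L}^{\,n+1}$ against $\mathcal E$ and again invoke the discrepancy comparison with the trivial family, now using that $K_X+(1-\beta)D'$ is \emph{ample} and that the pair is a semi-log-canonical model. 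Ampleness of the log canonical class controls the central-fibre contribution and, together with the slc condition, forces $\DF_\beta>0$ for every nontrivial configuration, giving log $K$-stability.

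The main obstacle is the passage to the non-normal, semi-log-canonical setting together with getting the discrepancy signs exactly right. In the normal klt case the comparison above is clean, but to cover slc pairs I must replace every ingredient --- the intersection formula for $\DF_\beta$, the negativity/discrepancy comparison, and the drop-out of exceptional terms --- by its semi-log-canonical analogue over a demi-normal total space, using Koll\'ar's theory of slc singularities and semi-resolutions. Equally delicate is the bookkeeping of the central-fibre terms (the multiplicities of the non-reduced fibre and the correct normalization by $\mathcal X_{0,\mathrm{red}}$), and, underlying it all, the transfer of a hypothesis imposed on the \emph{fibres} (central fibre normal, or the fibre pair slc) to (semi-)log-canonicity of the \emph{total} pair near the central fibre; this is precisely where inversion of adjunction must be applied and is what makes all the relevant discrepancies nonnegative.
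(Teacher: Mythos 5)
The paper does not actually prove this statement --- it is imported verbatim from \cite[Theorem 4.1]{OS11} --- so I am comparing your proposal against that source. Your strategy (intersection-number formula for $\DF_\beta$ on a compactification over $\mathbb{P}^1$, comparison with the product family on a common resolution, discrepancy analysis plus inversion of adjunction) is essentially the method of \cite{OS11} and its precursors \cite{Od08}, \cite{Od11}. Two steps as written are genuinely problematic, however. First, your opening reduction to test configurations with \emph{normal central fibre} is not available in this setting: that reduction is the theorem of \cite{LX11}, proved by an MMP argument specific to $\mathbb{Q}$-Fano varieties polarized by $-K_X$, whereas part (i) concerns an arbitrary polarization $L$ on a log Calabi--Yau pair and part (ii) the log canonical polarization. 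The reduction actually used in \cite{OS11} (following \cite{RT07} and \cite{Od08}) is to semi test configurations obtained as blow-ups of $X\times\mathbb{A}^1$ along flag ideals supported over the origin; the discrepancy comparison then runs over the exceptional and central-fibre components of that blow-up with no normality assumption on $\mathcal{X}_0$.

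Second, the sign claim at the heart of your part (i) is off by one: for a semi-log-canonical (resp.\ klt) pair the discrepancies of $\bigl(X\times\mathbb{P}^1,(1-\beta)(D\times\mathbb{P}^1)\bigr)$ satisfy $a_i\geq -1$ (resp.\ $>-1$), not $a_i\geq 0$. The nonnegative coefficients you need against $\bar{\mathcal{L}}^n$ arise only after adding $\mathrm{ord}_{E_i}\,q^{*}(X\times\{0\})\geq 1$ for each component lying over the central fibre, i.e.\ it is the log discrepancy of the pair with $X\times\{0\}$ added to the boundary, plus that multiplicity, which is $\geq 0$ (resp.\ $>0$); inversion of adjunction (slc of the fibre pair forces slc of the product pair with the fibre in the boundary) is exactly what makes this step legitimate --- you name the ingredient at the end, but the inequality in the body is asserted for the wrong quantity. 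Finally, strict positivity in the klt case and in part (ii) does not follow from ``discrepancies $>0$'' alone: one must separately exclude the possibility that all contributing intersection numbers $\bar{\mathcal{L}}^{n}\cdot E_i$ vanish for a nontrivial configuration, which in \cite{OS11} and \cite{Od08} is a dedicated argument showing the flag ideal must then be a power of $(t)$.
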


\begin{Thm}[{cf. \cite[Corollary 5.5]{OS11}}]\label{weak.alpha}
Let $X$ be an arbitrary $\mathbb{Q}$-Fano variety and 
$D$ is an integral pluri-anti-canonical Cartier divisor $D\in |-\lambda K_X|$ with some $\lambda\in \mathbb{Z}_{>0}$. 
Then $(X,-K_X)$ is logarithmically K-stable (resp.\ logarithmically K-semistable) for cone  angle $2\pi\beta$ with $\frac{\lambda-1}{\lambda}<\beta<
\bigl(\lambda-1+(\frac{n+1}{n}){\rm min}\{{\rm glct}(X;-K_X), {\rm glct}(D;-K_X|_D)\}\bigr)/\lambda$ 
(resp.\ $\frac{\lambda-1}{\lambda} \leq\beta\leq
\bigl(\lambda-1+(\frac{n+1}{n}){\rm min}\{{\rm glct}(X;-K_X), {\rm glct}(D;-K_X|_D)\}\bigr)/\lambda$). 
\end{Thm}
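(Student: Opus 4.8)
The plan is to recognise this as the pluri-anticanonical-divisor (logarithmic) form of the $\alpha$-invariant criterion for K-stability, and to prove it by bounding the logarithmic Donaldson-Futaki invariant from below. First I would rewrite the cone-angle datum as a log pair: setting $c := 1 - \lambda(1-\beta)$ one has $-(K_X + (1-\beta)D) \sim_{\mathbb{Q}} c(-K_X)$, so the condition $\beta > \frac{\lambda-1}{\lambda}$ is exactly $c>0$, i.e.\ $(X, (1-\beta)D)$ is a log Fano pair with ample log anticanonical class, while the lower endpoint $\beta = \frac{\lambda-1}{\lambda}$ gives $c=0$, the log Calabi-Yau case, where log K-semistability is delivered directly by Theorem \ref{CY.gen}(i). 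I would therefore dispatch the endpoint by Theorem \ref{CY.gen} and concentrate on the log Fano range $c>0$.

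For $c>0$, since K-stability is unchanged under rescaling the polarization, I may test $(X, (1-\beta)D)$ against test configurations of $(X, -K_X)$ and study the logarithmic Donaldson-Futaki invariant $\DF_\beta$, which is the Fano invariant corrected by a boundary term weighted by $(1-\beta)$. By the reduction recorded after Proposition \ref{tc.1-ps} (the $K_m$-stability formalism of \cite{LX11}) it suffices to treat test configurations whose central fibre is normal, and for these Odaka's intersection-theoretic formula expresses $\DF_\beta$ as a sum of a fixed leading term and a discrepancy contribution of the central fibre together with its meeting with the closure of the boundary.

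The heart of the argument is then to bound that discrepancy contribution below by log canonical thresholds. Viewing the central fibre inside a suitable compactification as a degeneration of a pluri-anticanonical divisor, its log canonical threshold is controlled from below by ${\rm glct}(X;-K_X)$; applying adjunction to the closure of $D$ and using that the restricted system is proportional to $-K_X|_D$, the contribution along the boundary is controlled from below by ${\rm glct}(D;-K_X|_D)$. Assembling the two contributions, weighing them by $(1-\beta)$ and $c$, and comparing with the leading term produces the critical constant $\frac{n+1}{n}$ and the minimum $\min\{{\rm glct}(X;-K_X), {\rm glct}(D;-K_X|_D)\}$; a direct rearrangement then shows that the positivity (resp.\ non-negativity) of $\DF_\beta$ holds precisely on the interval $\beta < \bigl(\lambda-1 + \frac{n+1}{n}\min\{{\rm glct}(X;-K_X), {\rm glct}(D;-K_X|_D)\}\bigr)/\lambda$ (resp.\ its closed analogue), which is the asserted range.

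I expect the main obstacle to be this lower bound on the discrepancy part of $\DF_\beta$: one must track the fractional coefficient $1-\beta$ on the boundary, handle central fibres and boundary closures that are aligned or tangent, and carry out the adjunction to $D$ so that the bookkeeping lands exactly on the constant $\frac{n+1}{n}$ rather than on a weaker threshold; a subsidiary technical point is ensuring, on the claimed range, that the pairs in question carry the log canonical (resp.\ klt) singularities for which both the Donaldson-Futaki formula and Theorem \ref{CY.gen}(i) are valid. These are precisely the estimates carried out in \cite[\S5]{OS11}.
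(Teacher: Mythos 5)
The paper does not actually prove Theorem \ref{weak.alpha}: it is imported from \cite[Corollary 5.5]{OS11}, with only the remark that the extension from $\lambda=1$ to general $\lambda$ is straightforward, so there is no internal argument to compare yours against line by line. Your sketch is a reasonable reconstruction of the strategy of \cite{OS11} --- reparametrize the cone angle as a log-Fano coefficient $c=1-\lambda(1-\beta)$, then bound the logarithmic Donaldson--Futaki invariant from below by discrepancy and log-canonical-threshold estimates on $X$ and, via adjunction, on $D$, which is where the constant $\frac{n+1}{n}$ arises --- and in that sense it matches the source the paper relies on. Two caveats. First, your reduction to test configurations with normal central fibre via \cite{LX11} is not how \cite{OS11} proceeds and is not free in the logarithmic setting; \cite{OS11} instead reduces to semi-test-configurations obtained as blow-ups of flag ideals on $X\times\mathbb{A}^{1}$, whose central fibres are typically non-normal, and it is for those that the intersection-theoretic formula you invoke is established. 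Second, dispatching the endpoint $\beta=\frac{\lambda-1}{\lambda}$ by Theorem \ref{CY.gen}(i) requires $(X,\frac{1}{\lambda}D)$ to be semi-log-canonical, which does not follow from the stated hypotheses and is not verified; in \cite{OS11} the closed-range semistability falls out of the same uniform estimate rather than a separate appeal to the log Calabi--Yau case. Since all the substantive estimates are deferred to \cite[Section 5]{OS11}, your proposal is a correct roadmap rather than a self-contained proof --- which, to be fair, is also all the paper itself offers for this statement.
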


\noindent 

We make some remarks. 
Recall that Theorem \ref{CY.gen} (i) extends and algebraically recovers \cite[Theorem 1.1]{Sun11}, \cite[Theorem1.1]{Bre11} and \cite[Theorem 2]{JMR11}. In other words, 
at least for the case where $X$ is smooth, $D$ is a smooth integral divisor, 
the above is expected to follow from their results as well. 
Regarding Theorem \ref{weak.alpha}, 
the original version of \cite[Corollary 5.5]{OS11} treated only $\lambda=1$ case, 
but it is recently noticed that it can be simply extended to the above form.  
We also remark that the case if $X$ and $D$ are both smooth and $\lambda=1$, 
Theorem \ref{weak.alpha} also follows from 
\cite[Theorem 1.8]{Ber10} combined with \cite[Theorem1.1]{Ber12}. 

The above two theorems imply that 

\begin{Cor}\label{w.weak.alpha}
For an arbitrary smooth Fano manifold $X$ and a smooth pluri-anticanonical 
divisor $D\in |-\lambda K_X|$, $(X,-K_X)$ is logarithmically K-stable (resp.\ logarithmically K-semistable) for cone  angle $2\pi\beta$ with $0<\beta<
(\lambda-1+(\frac{n+1}{n}){\rm min}\{{\rm glct}(X;-K_X), {\rm glct}(D;-K_X|_D)\})/\lambda$ 
(resp.\ $0 \leq\beta\leq
(\lambda-1+(\frac{n+1}{n}){\rm min}\{{\rm glct}(X;-K_X), {\rm glct}(D;-K_X|_D)\})/\lambda$). 
\end{Cor}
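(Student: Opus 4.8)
The plan is to obtain Corollary \ref{w.weak.alpha} by combining Theorem \ref{weak.alpha} with the two special structural results of Theorem \ref{CY.gen}, using the former to cover the upper range of cone angles and the latter to cover the small-angle regime down to $\beta = 0$. The only gap between the statement of Theorem \ref{weak.alpha} and the Corollary is the lower endpoint: Theorem \ref{weak.alpha} asserts logarithmic K-(semi)stability only on the interval starting at $\frac{\lambda-1}{\lambda}$, whereas the Corollary claims it on the whole interval starting at $0$. So the heart of the matter is to fill in the cone angles $0 < \beta \le \frac{\lambda-1}{\lambda}$ (respectively the closed version), after which one simply observes that the two ranges overlap at $\beta = \frac{\lambda-1}{\lambda}$ and take their union.

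First I would set $m := \min\{{\rm glct}(X;-K_X),\ {\rm glct}(D;-K_X|_D)\}$ to lighten notation, so that the upper endpoint reads $\bigl(\lambda-1+(\tfrac{n+1}{n})m\bigr)/\lambda$. Theorem \ref{weak.alpha} directly gives logarithmic K-stability for $\frac{\lambda-1}{\lambda} < \beta < \bigl(\lambda-1+(\tfrac{n+1}{n})m\bigr)/\lambda$ and K-semistability on the corresponding closed interval, so nothing remains to be done there. For the complementary range $0 < \beta \le \frac{\lambda-1}{\lambda}$, the plan is to appeal to Theorem \ref{CY.gen}(i): one needs that the pair $(X,(1-\beta)D)$ is log Calabi-Yau and kawamata-log-terminal (for K-stability) or semi-log-canonical (for K-semistability), which then yields logarithmic K-stability with respect to the boundary parameter $\beta$. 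The numerical Calabi-Yau condition $K_X + (1-\beta)D \equiv 0$ holds precisely because $D \in |-\lambda K_X|$ forces $(1-\beta)D \equiv -(1-\beta)\lambda K_X$, and one checks that the relevant normalization matches the cone-angle convention $2\pi\beta$ used throughout.

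The main obstacle — and the step I would dwell on — is verifying the klt (resp.\ slc) property of $(X,(1-\beta)D)$ on the small-angle range. Since $X$ and $D$ are both assumed smooth, the discrepancy computation is governed entirely by the coefficient $(1-\beta)$ multiplying the smooth divisor $D$: the pair $(X,(1-\beta)D)$ is klt exactly when $(1-\beta) < 1$, i.e.\ $\beta > 0$, and log canonical when $\beta \ge 0$. This is where smoothness of $D$ is essential and is precisely why the Corollary can extend the range all the way down to $\beta = 0$ (the endpoint cannot be reached in the singular setting of Theorem \ref{weak.alpha}). I would therefore check that for every $\beta$ in $(0, \frac{\lambda-1}{\lambda}]$ the coefficient stays in the klt range, record that the log-Calabi-Yau numerical equivalence is automatic, and invoke Theorem \ref{CY.gen}(i).

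Finally I would assemble the pieces: on $(0,\frac{\lambda-1}{\lambda}]$ stability comes from Theorem \ref{CY.gen}(i) via the klt log-Calabi-Yau structure, and on $[\frac{\lambda-1}{\lambda}, \text{upper endpoint})$ it comes from Theorem \ref{weak.alpha}; the two intervals share the point $\frac{\lambda-1}{\lambda}$, so their union is exactly $(0, \text{upper endpoint})$, giving logarithmic K-stability throughout, and the analogous closed-interval statement gives K-semistability on $[0, \text{upper endpoint}]$. The argument is essentially a bookkeeping union of two already-established ranges, with the genuine content confined to the elementary klt verification made possible by the smoothness hypotheses on $X$ and $D$.
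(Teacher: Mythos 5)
Your overall decomposition (cover $\bigl(\tfrac{\lambda-1}{\lambda},\,\cdot\bigr)$ by Theorem \ref{weak.alpha} and fill in the low-angle range by Theorem \ref{CY.gen}) is exactly the combination the paper intends, but the way you invoke Theorem \ref{CY.gen}(i) contains a genuine error. You assert that the log Calabi--Yau condition $K_X+(1-\beta)D\equiv 0$ holds for every $\beta$ in $(0,\tfrac{\lambda-1}{\lambda}]$ ``because $D\in|-\lambda K_X|$ forces $(1-\beta)D\equiv -(1-\beta)\lambda K_X$''. But then $K_X+(1-\beta)D\equiv \bigl((1-\beta)\lambda-1\bigr)(-K_X)$, and since $-K_X$ is ample this is numerically trivial only when $(1-\beta)\lambda=1$, i.e.\ only at the single value $\beta=\tfrac{\lambda-1}{\lambda}$. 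For every $\beta$ strictly below that, the pair is \emph{not} log Calabi--Yau --- the adjoint class is ample --- so Theorem \ref{CY.gen}(i) simply does not apply on the range where you need it (for $\lambda\geq 2$; when $\lambda=1$ that range is empty and the issue does not arise).

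The repair is to use part (ii) rather than part (i) on that range: for $0<\beta<\tfrac{\lambda-1}{\lambda}$ the pair $(X,(1-\beta)D)$ is klt (your smoothness/coefficient computation is correct and is the right thing to check) and $K_X+(1-\beta)D$ is ample, so $(X,(1-\beta)D)$ is a log canonical model and Theorem \ref{CY.gen}(ii) gives log K-stability for the polarization $K_X+(1-\beta)D$; since that class is the positive multiple $\bigl((1-\beta)\lambda-1\bigr)(-K_X)$ of the anticanonical class and log K-stability is invariant under rescaling the polarization, this is the statement for $(X,-K_X)$. Part (i) is then used only at the single angle $\beta=\tfrac{\lambda-1}{\lambda}$ (klt log Calabi--Yau, giving the stability claim there, which Theorem \ref{weak.alpha} alone does not cover since its stability interval is open at that endpoint) and at $\beta=0$ in the case $\lambda=1$ (lc log Calabi--Yau, giving only semistability). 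With that substitution your bookkeeping of the union of intervals goes through and matches the paper's intended one-line deduction from the two theorems.
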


\noindent
Combining the above lower boundedness of log canonical thresholds and 
Corollary \ref{w.weak.alpha}, 
we can take uniform $\beta_{\infty}(>0)$ 
such that every Fano manifolds $\mathcal{X}_s$ 
($s\in S$) are log K-stable for cone angle $2\pi\beta_{\infty}$ with respect to the divisor Cartier $\mathcal{D}_s$. 

We use the following theorem for the proof as well. 
\begin{Thm}{\cite[Theorem 1]{CDS12b}, 
\cite[Theorem 2, the paragraph below it]{CDS13}}\label{log.GH}
For a Fano manifold $X$, 
if $X$ does not admit K\"ahler-Einstein metric, the following holds. 
For a smooth divisor $D\in |-\lambda K_{X}|$ with some $\lambda \in \mathbb{Z}_{>0}$, 
we have a non-trivial normal log test configuration $(\mathcal{X},\mathcal{D})$ 
of $(X,(1-\beta_{\infty})D)$ with cone 
angle $2\pi\beta_\infty$ ($1-\frac{1}{\lambda}<\beta_{\infty}\leq1$) such that $\DF_{\beta_{\infty}}((\mathcal{X},\mathcal{D});-K_{\mathcal{X}})=0$ and $\mathcal{X}_0$ is a $\mathbb{Q}$-Fano variety. Moreover, the $\mathbb{Q}$-Gorenstein index of $\mathcal{X}_{0}$ can be taken uniformly if we have a fixed dimension $\dim(X)$, $\lambda$ and 
a lower bound for $\beta_{\infty}$ which is bigger than $1-\frac{1}{\lambda}$. 
\end{Thm}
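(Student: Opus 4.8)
The plan is to recount the proof through Donaldson's conical continuity method together with the Chen--Donaldson--Sun / Tian convergence theory. Fix the smooth divisor $D\in|-\lambda K_X|$ and, for $\beta\in(1-\tfrac1\lambda,1]$, consider the conical K\"ahler--Einstein equation seeking $\omega_\beta\in c_1(X)$ with $\mathrm{Ric}(\omega_\beta)=\beta\,\omega_\beta+(1-\beta)\tfrac1\lambda[D]$, i.e.\ a metric with cone angle $2\pi\beta$ transverse to $D$; this is consistent in cohomology since $[D]=\lambda\,c_1(X)$, and at $\beta=1$ it is the smooth K\"ahler--Einstein equation. First I would record that the set $S$ of $\beta$ admitting such a solution on the smooth base $X$ is open (Donaldson's linear theory of conical metrics with the implicit function theorem) and nonempty for $\beta$ just above $1-\tfrac1\lambda$, where $K_X+(1-\beta)D$ is nearly trivial and $(X,(1-\beta)D)$ is a log-Fano pair whose large log-$\alpha$ invariant -- controlled by the lower bounds of Corollary \ref{w.weak.alpha} -- forces existence. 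Since $X$ carries no K\"ahler--Einstein metric, the path cannot terminate at a smooth solution at $\beta=1$; I set $\beta_\infty\in(1-\tfrac1\lambda,1]$ to be the critical angle, the supremum of solvability, so that solutions $\omega_{\beta_i}$ exist along a sequence $\beta_i\nearrow\beta_\infty$ while the configuration degenerates.

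Next I would extract the degenerate limit. The uniform Ricci lower bound $\beta_i$, the fixed cohomology class and non-collapsing yield, via the conical analogue of Cheeger--Colding--Tian compactness, a Gromov--Hausdorff subsequential limit $(X_\infty,d_\infty)$. The analytic heart is the conical \emph{partial $C^0$-estimate}: it provides a uniform $m=m(\dim X,\lambda,\beta_\infty)$ for which $|-mK_X|$ embeds every $(X,\omega_{\beta_i})$ into a fixed $\mathbb{P}^N$ with $X_\infty$ arising as the Hilbert-scheme limit of the images, and identifies $X_\infty$ with a normal projective $\mathbb{Q}$-Fano variety with klt singularities carrying a weak conical K\"ahler--Einstein metric of angle $2\pi\beta_\infty$ along a limit divisor $D_\infty$. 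This is precisely the enhancement of Gromov--Hausdorff convergence to continuity of complex structure underlying Theorem \ref{DS12}.

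Then I would build the test configuration. Because the whole family sits in the fixed $\mathbb{P}^N$ and the limit is reached by a genuine degeneration of complex structures, the degeneration of $(X,(1-\beta_\infty)D)$ to $(X_\infty,(1-\beta_\infty)D_\infty)$ is realized by a one-parameter subgroup of $\GL(N+1)$, giving a non-trivial normal log test configuration $(\mathcal{X},\mathcal{D})$ with central fiber $(\mathcal{X}_0,\mathcal{D}_0)=(X_\infty,D_\infty)$, normality coming from that of $X_\infty$ after normalizing the total space. For the vanishing of the invariant I would use that $\mathcal{X}_0$ admits a weak conical K\"ahler--Einstein metric, so $(\mathcal{X}_0,(1-\beta_\infty)\mathcal{D}_0)$ is log K-polystable (the log ``K\"ahler--Einstein $\Rightarrow$ K-(poly)stability'' direction, in the spirit of Theorem \ref{CY.gen}); for a special test configuration whose central fiber is itself log K\"ahler--Einstein the log Donaldson--Futaki invariant equals the log Futaki character of the induced $\mathbb{C}^\times$-action on $\mathcal{X}_0$, which vanishes, whence $\DF_{\beta_\infty}((\mathcal{X},\mathcal{D});-K_{\mathcal{X}})=0$.

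Finally, for the uniform $\mathbb{Q}$-Gorenstein index I would argue boundedness. The Gromov--Hausdorff precompactness gives geometric bounds (diameter, volume, Ricci) depending only on $\dim X$, $\lambda$ and a lower bound for $\beta_\infty$, and through the uniform $m$ of the partial $C^0$-estimate these bound the embedding degree of $\mathcal{X}_0$ in $\mathbb{P}^N$; the arising $\mathbb{Q}$-Fano limits therefore lie in a bounded family, which has bounded Gorenstein index. The main obstacle is plainly the analytic core of the second paragraph -- the conical compactness theory and the partial $C^0$-estimate identifying $X_\infty$ as a normal $\mathbb{Q}$-Fano variety together with the continuity of complex structure -- which is the deep contribution of \cite{CDS12b}, \cite{CDS13} and \cite{DS12} and which I would invoke rather than reprove.
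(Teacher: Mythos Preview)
The paper does not give its own proof of this theorem; it is quoted verbatim from \cite{CDS12b}, \cite{CDS13} and used as a black box in the proof of Theorem~\ref{M.thm}. The only supplementary remark the paper makes is that the uniform bound on the $\mathbb{Q}$-Gorenstein index of $\mathcal{X}_0$ is equivalent, via \cite[Corollary~1.8]{HMX12}, to the limits all lying in a fixed Hilbert scheme. Your write-up is therefore not a comparison target but rather an independent sketch of the Chen--Donaldson--Sun argument, and as such it captures the architecture correctly: conical continuity path, critical angle $\beta_\infty$, Gromov--Hausdorff extraction with the partial $C^0$-estimate identifying a $\mathbb{Q}$-Fano limit, and the vanishing of $\DF_{\beta_\infty}$ from the weak conical K\"ahler--Einstein metric on the central fiber.

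Two places where your sketch glosses over genuine content. First, the assertion that the degeneration ``is realized by a one-parameter subgroup of $\GL(N+1)$'' is not automatic from the Hilbert-scheme convergence alone: one knows only that $X_\infty$ lies in the orbit closure of $X$, and passing from this to an actual $\mathbb{G}_m$-degeneration with central fiber \emph{isomorphic} to $X_\infty$ uses that $(X_\infty,(1-\beta_\infty)D_\infty)$ is log K-polystable (hence has closed orbit and reductive stabilizer) together with a Luna-slice/Hilbert--Mumford argument; this is a substantive step in \cite{CDS13}. Second, your appeal to Corollary~\ref{w.weak.alpha} for the nonemptiness of the solvable set is misdirected: that corollary establishes log K-\emph{stability} for small $\beta$, whereas what the continuity method needs is \emph{existence} of the conical K\"ahler--Einstein metric, which is supplied directly by the log $\alpha$-invariant existence criterion (\cite{JMR11}, \cite{Ber10}) rather than by the stability side.
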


\noindent
Note that the bound of index of $\mathcal{X}_0$ is equivalent to 
that they are all parametrized in a fixed Hilbert scheme, by the recent boundedness result \cite[Corollary 1.8]{HMX12}. Note that the analogical 
bound of $\mathbb{Q}$-Gorenstein index of the limit as Theorem \ref{lower.bounds} 
was indeed already proved earlier in non-log setting  as well (\cite{DS12}). 

Thanks to the existence of the uniform $\beta_{\infty}$, 
in turn Theorem \ref{log.GH} implies that 
there is a large enough but fixed $m$ such that for any $s\in S$, $\mathcal{X}_s$ is K\"ahler-Einstein Fano manifold 
if and only if it is $K_m$-stable. 
Take a Hilbert scheme $H$ which exhausts all $\mathcal{X}_s$, embedded by 
$|-mK_{\mathcal{X}_s}|$. Automatically we have CM line bundle on $H$ which we denote by  $\Lambda_{CM}$ and there are actions of $\SL(h^0(-mK_{\mathcal{X}_s}))$ on both $H$ 
and $\Lambda_{CM}$ (linearization). 
Denote the locus in $H$ which parametrizes K\"ahler-Einstein Fano manifold  $\mathcal{X}_s$ with $s\in S$ by $H_{KE}$, and the locus which parametrizes 
normal fibers by $H_{normal}$. It is well known that $H_{normal}$ is a Zariski open 
subset of $H$ as a general phenomenon. 

Let us fix a maximal torus $T$ of $\SL(h^0(-mK_{\mathcal{X}_s}))$. 
Recall the following known fact for usual setting in Geometric Invariant 
Theory \cite{Mum65} which is for \textit{ample} linearized line bundles. 

\begin{Lem}[{cf.\ \cite[Chapter 2, Proposition 2.14]{Mum65}}]\label{PL.mum}
Consider the action of a reductive algebraic group $G$ on a polarized projective 
scheme $(H,\Lambda)$ i.e. $H$ is a projective scheme and $\Lambda$ is an ample 
linearized line bundle. Let us denote the corresponding GIT weights function with respect to a one parameter subgroup $\varphi\colon \mathbb{G}_m\rightarrow G$ at $h\in H$  by $\lambda(h,\varphi;\Lambda)$. 
Then there are some finite linear functions $\{l_i\}_{i\in I}$ on ${\rm Hom}_{{\rm alg.grp}}(\mathbb{G}_m,T)\otimes_{\mathbb{Z}} \mathbb{R}$ with rational coefficients, indexed by 
a finite set $I$ such that 
the weight function $\lambda(h,-;\Lambda)$, regarded as a function from 
${\rm Hom}_{{\rm alg.grp}}(\mathbb{G}_m,T)$ to $\mathbb{Z}$, extends to a 
piecewise linear rational function of the form 
$$\sup\{l_j(-)\mid j\in J_h\}$$ with some $J_h\subset I$. 
Moreover, $\psi\colon H\rightarrow 2^{I}$ which maps $h$ to $J_h$, 
is constructible in the sense that for any $J\in I$, 
$\{h\in H \mid J_h=J\}$ is constructible. 
\end{Lem}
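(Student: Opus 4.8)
The plan is to follow the classical analysis of the Hilbert--Mumford weight as in \cite[Chapter 2, Proposition 2.14]{Mum65}, reducing everything to the weight decomposition of a single linear representation. First I would replace $\Lambda$ by a sufficiently high power $\Lambda^{\otimes N}$ which is very ample; since the weight is homogeneous in the linearization, one has $\lambda(h,\varphi;\Lambda^{\otimes N})=N\,\lambda(h,\varphi;\Lambda)$, so it suffices to analyze $\Lambda^{\otimes N}$ and then divide the resulting linear functions by $N$ --- this is precisely the source of the \emph{rational} (rather than integral) coefficients asserted for the $l_i$. With $\Lambda^{\otimes N}$ very ample one obtains a $G$-equivariant closed embedding $H\hookrightarrow \mathbb{P}(V)$, where $V=H^{0}(H,\Lambda^{\otimes N})^{\vee}$ carries a linear $G$-action lifting the one on $H$. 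Restricting to the fixed maximal torus $T$, I would decompose $V=\bigoplus_{\chi}V_{\chi}$ into its finitely many $T$-weight spaces and take $I$ to be the (finite) set of characters $\chi$ with $V_{\chi}\neq 0$.

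Next I would read off the weight combinatorially. For $h\in H$ choose a nonzero lift $\tilde h\in V$ and write $\tilde h=\sum_{\chi}\tilde h_{\chi}$ in the weight decomposition; set $J_{h}:=\{\chi\in I\mid \tilde h_{\chi}\neq 0\}$, the support of $\tilde h$, which is independent of the chosen lift. For a cocharacter $\varphi\in \mathrm{Hom}(\mathbb{G}_m,T)$, the limit $h_{0}=\lim_{t\to 0}\varphi(t)\cdot h$ exists because $H$ is projective, is fixed by $\varphi$, and the surviving lift $t^{-m}\varphi(t)\cdot\tilde h$ with $m=\min\{\langle\chi,\varphi\rangle\mid \chi\in J_{h}\}$ is a nonzero vector of pure $\varphi$-weight $m$; in Mumford's sign convention the induced $\mathbb{G}_m$-action on the fiber $\Lambda^{\otimes N}_{h_{0}}$ then has weight $-m$. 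Thus, after dividing by $N$, $\lambda(h,\varphi;\Lambda)=\sup\{l_{\chi}(\varphi)\mid \chi\in J_{h}\}$ with $l_{\chi}:=-\tfrac{1}{N}\langle\chi,\,-\,\rangle$ a rational linear function on $\mathrm{Hom}(\mathbb{G}_m,T)\otimes_{\mathbb{Z}}\mathbb{R}$. This is exactly the asserted form, and being a supremum of finitely many rational linear functions it extends to a piecewise-linear rational function on the whole real vector space; its values on the lattice lie in $\mathbb{Z}$ because $\lambda(h,\varphi;\Lambda)$ is the genuine weight of the honest fiber $\Lambda_{h_{0}}$, so the division by $N$ is automatically integral on $\mathrm{Hom}(\mathbb{G}_m,T)$.

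Finally, for constructibility I would observe that the condition $\chi\in J_{h}$, i.e. $\tilde h_{\chi}\neq 0$, defines a $T$-invariant \emph{open} subset of $H$: its complement $\{\tilde h_{\chi}=0\}$ is the intersection of $H$ with the linear subspace $\mathbb{P}\bigl(\bigoplus_{\chi'\neq\chi}V_{\chi'}\bigr)$, hence closed. Consequently, for each subset $J\subset I$, the fiber $\{h\in H\mid J_{h}=J\}=\bigcap_{\chi\in J}\{\tilde h_{\chi}\neq 0\}\cap\bigcap_{\chi\notin J}\{\tilde h_{\chi}=0\}$ is locally closed, and since $I$ is finite these finitely many strata partition $H$; hence $\psi\colon h\mapsto J_{h}$ is constructible.

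The main obstacle --- really the only step requiring care --- is the identification of the intrinsically defined weight $\lambda(h,\varphi;\Lambda)$, given by the $\mathbb{G}_m$-action on the fiber of the linearization over the limit point $h_{0}$, with the purely combinatorial extremum over the support $J_{h}$; this is the heart of Mumford's computation and must be carried out compatibly with the fixed sign conventions and then reconciled with the passage $\Lambda\rightsquigarrow\Lambda^{\otimes N}$, so that the coefficients land in $\mathbb{Q}$ while the values on the integral lattice remain in $\mathbb{Z}$. Everything else is formal once the equivariant embedding and the $T$-weight decomposition are in place.
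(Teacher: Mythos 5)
Your proof is correct and follows essentially the same route the paper relies on: the paper offers no independent argument, deferring entirely to Mumford's Proposition 2.14, whose proof is precisely your computation of the Hilbert--Mumford weight via the $T$-weight decomposition of an equivariant projective embedding (after passing to a very ample power and dividing by $N$). Your final paragraph also correctly supplies the constructibility of $h\mapsto J_h$ as a locally closed stratification by supports, which is exactly the point the paper asserts ``follows rather easily from the proof and the arguments written there.''
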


\noindent
The last line is not stated in \cite[Proposition 2.14]{Mum65} but follows rather easily from the proof and the arguments written there. 

Recall that the CM line bundle \cite{PT06} (cf. also \cite{FS90}) 
is a line bundle defined on the base scheme 
for an arbitrary polarized family and the formal GIT weight of the CM line bundle on 
the Hilbert scheme is exactly the corresponding Donaldson-Futaki invariant. 
However, unfortunately we can not directly apply to the CM line bundle $\Lambda_{CM}$ which is \textit{not} necessarily ample in general. 
Nevertheless, Lemma \ref{PL.mum} can be still extended to the case for our $H$ and the CM line bundle $\Lambda_{CM}$ on it, though $\lambda(t,-;\Lambda_{CM})$ is not necessarily convex. To see that, first we take a Pl\"ucker polarization $\Lambda_{Pl}$ on $H$ which is known to be very ample, yielding the Pl\"ucker embedding of Hilbert scheme. Take sufficiently large $l\in \mathbb{Z}_{>0}$ such that $\Lambda_{CM}\otimes \Lambda_{Pl}^{\otimes l}$ is also very ample. Then the point is that 
$\lambda(h;-;\Lambda_{CM})=\lambda(h;-;\Lambda_{CM}\otimes \Lambda_{Pl}^{\otimes l})
-l\mu(h;-;\Lambda_{Pl})$. Therefore Lemma \ref{PL.mum} extends to 
the case with our setting that Hilbert scheme is $H$, $L=\Lambda_{CM}$ and 
$G=\SL(h^0(-mK_{\mathcal{X}_s}))$ as follows. 

\begin{Lem}
There are some finite linear rational functions $\{l_i\}_{i\in I'}$ on ${\rm Hom}_{{\rm alg.grp}}(\mathbb{G}_m,T)\otimes_{\mathbb{Z}} \mathbb{R}$ with rational coefficients, indexed by 
a finite set $I'$ such that the followings hold. For each $[X\subset \mathbb{P}]$, 
there is $J_X\subset I'$ such that 
its Donaldson-Futaki invariants with respect to one parameter subgroups of 
$G:=\SL(h^0(-mK_{\mathcal{X}_s})$ regarded as a function from 
${\rm Hom}_{{\rm alg.grp}}(\mathbb{G}_m,T)$ to $\mathbb{Z}$, extends to a 
piecewise linear rational function whose pieces are $\{l_j\}_{j\in J_X}$ 
with some $J_X\subset I'$. 
Moreover, $\psi$ is constructible in the sense that for any subset $J_X\subset I'$, 
$\{[X\subset \mathbb{P}]\in H \mid J_X=J\}$ is constructible. 
\end{Lem}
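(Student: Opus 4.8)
The plan is to deduce the statement from Lemma~\ref{PL.mum} by writing the (possibly non-ample) CM line bundle as a formal difference of two very ample ones, exactly along the lines of the identity displayed immediately above the statement. First I would record the elementary additivity of the GIT weight under tensor product: for any $h\in H$ and any one-parameter subgroup $\varphi$ one has $\lambda(h,\varphi;L_1\otimes L_2)=\lambda(h,\varphi;L_1)+\lambda(h,\varphi;L_2)$, since each weight is read off from the induced $\mathbb{G}_m$-action on the fibre over the common limit point $\lim_{t\to 0}\varphi(t)\cdot h$. Combined with the choice of $l$ making $\Lambda_{CM}\otimes\Lambda_{Pl}^{\otimes l}$ very ample, this yields
$$\lambda(h,-;\Lambda_{CM})=\lambda(h,-;\Lambda_{CM}\otimes\Lambda_{Pl}^{\otimes l})-l\,\mu(h,-;\Lambda_{Pl}),$$
a difference of two weight functions attached to very ample linearized line bundles.

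Next I would apply Lemma~\ref{PL.mum} separately to each very ample bundle. This produces, on ${\rm Hom}_{{\rm alg.grp}}(\mathbb{G}_m,T)\otimes_{\mathbb{Z}}\mathbb{R}$, two finite families of rational linear functions $\{a_j\}_{j\in I_1}$ and $\{b_k\}_{k\in I_2}$, together with constructible assignments $h\mapsto J_h^{(1)}\subset I_1$ and $h\mapsto J_h^{(2)}\subset I_2$, such that $\lambda(h,-;\Lambda_{CM}\otimes\Lambda_{Pl}^{\otimes l})=\sup_{j\in J_h^{(1)}}a_j$ and $\mu(h,-;\Lambda_{Pl})=\sup_{k\in J_h^{(2)}}b_k$. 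I would then take $I':=I_1\times I_2$ and $l_{(j,k)}:=a_j-l\,b_k$, which are finitely many rational linear functions. On the common refinement of the two fans cut out by the two families both suprema are simultaneously linear, so their difference $\lambda(h,-;\Lambda_{CM})$ is piecewise linear with pieces drawn from $\{l_{(j,k)}\}$. Here one must be careful: since the difference of two convex functions is in general not convex, it cannot be written as a single $\sup$; on a given maximal cone the active piece is the pair $(j,k)$ for which $a_j$ and $b_k$ are respectively active there, and I would define $J_X$ to be the set of such pairs whose region of activity is full-dimensional.

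It remains to verify constructibility of $X\mapsto J_X$ and to translate the result into a statement about $\DF$. The key observation is that the region on which $a_j$ is active, namely $\{v : a_j(v)\ge a_{j'}(v)\ \forall j'\in J_h^{(1)}\}$, and likewise the region on which $b_k$ is active, depend only on the subsets $J_h^{(1)}$, $J_h^{(2)}$ and on the fixed linear functions; hence whether $(j,k)$ contributes a full-dimensional piece, i.e.\ whether $(j,k)\in J_X$, depends only on the pair $(J_h^{(1)},J_h^{(2)})$. As there are only finitely many such pairs and each locus $\{h : (J_h^{(1)},J_h^{(2)})=(J,J')\}$ is the intersection of two constructible sets, the fibres of $X\mapsto J_X$ are finite Boolean combinations of constructible sets and are therefore constructible. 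Finally, invoking Paul--Tian \cite{PT06}, that the CM weight $\lambda(h,-;\Lambda_{CM})$ is precisely the Donaldson--Futaki invariant, converts the piecewise-linear description of the CM weight into the asserted description of $\DF$.

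I expect the genuinely non-formal step to be the passage from the convex case of Lemma~\ref{PL.mum} to the non-convex difference: one must isolate the finite set of linear pieces of $\lambda(h,-;\Lambda_{CM})$ and, above all, confirm that the combinatorial type $J_X$ of the resulting piecewise-linear function is governed solely by the two convex data $(J_h^{(1)},J_h^{(2)})$, so that constructibility is inherited from Lemma~\ref{PL.mum}. Everything else is either the additivity of weights under tensor product or a direct citation of Lemma~\ref{PL.mum} together with the identification of the CM weight with the Donaldson--Futaki invariant.
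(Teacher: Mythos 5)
Your proposal is correct and follows essentially the same route as the paper: the paper likewise writes $\Lambda_{CM}$ as the formal difference of the very ample bundle $\Lambda_{CM}\otimes\Lambda_{Pl}^{\otimes l}$ and $l$ copies of the Pl\"ucker polarization, applies Lemma \ref{PL.mum} to each, uses additivity of GIT weights under tensor product, and identifies the CM weight with the Donaldson--Futaki invariant via \cite{PT06}. Your write-up in fact supplies details the paper leaves implicit (the product index set, the observation that the difference is no longer a single $\sup$, and why the combinatorial type is governed by the pair of convex data), all of which are consistent with the intended argument.
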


Next, let us discuss from another viewpoint which is more birational-geometric. 
This is related to what Donaldson called ``splitting of orbits" in \cite{Don09}. 
Considering the family over $S$ and trivialize the locally free coherent sheaf  $\pi_*\mathcal{O}(\mathcal{D})$ on a Zariski open covering $\{S_i\}_i$ we have 
the corresponding morphism $(\sqcup S_i\times T) \rightarrow H$, which can be regarded as a 
rational map $(\sqcup S_i\times (\mathbb{P}^1)^r) \dashrightarrow H$. 
For simplicity, re-set $S:=\sqcup S_i$ from now on which does not lose generality 
from our assertion of the theorem. 
Thinking of a resolution of indeterminacy, we get some blow up of $S\times (\mathbb{P}^1)^r$ along a closed subscheme inside $S\times ((\mathbb{P}^1)^r\setminus T)$. The blow up is generically isomorphic over $S\times ((\mathbb{P}^1)^r\setminus T)$, 
which means the GIT weights with respect to any fixed one parameter subgroups in $T$ 
stay constant for some Zariski open dense subset $S'$ of $S$. 
Replace $S$ by $S\setminus S'$ then we can 
construct a constructible stratification of $S$ such that each stratum's 
degenerations with respect to any fixed one parameter subgroup of $G$ fits into a 
flat family again. This implies that those corresponding Donaldson-Futaki invariants are 
constant on each strata by \cite{PT06}. We can also prove it by using Wang's interpretation of Donaldson-Futaki invariant \cite{Wan12}. 

\begin{Claim}\label{constructible}
There is a finite set $I$ of one parameter subgroups $\{\varphi_{i}\}\colon 
\mathbb{G}_m\rightarrow T$ ($i\in I$) such that the following is true. 
There is a stratification of $H_{normal}$ by constructible subsets $\{H_J\}_{J\subset I}$, 
such that for any $t\in H_J \subset H_{normal}$, $\mathcal{X}_t$ is $K_m$-stable 
(resp., $K_m$-semistable) if and only if 
$\DF(\varphi_j; [\mathcal{X}_t\subset \mathbb{P}])>0$ (resp., $\DF(\varphi_j; [\mathcal{X}_t\subset \mathbb{P}])\geq 0$) for all $j\in J$ where $\DF(\varphi_j;[\mathcal{X}_t\subset \mathbb{P}])$ actually only depends on $j$ and $J$. Here $\DF(\varphi_j;[\mathcal{X}_t\subset \mathbb{P}])$ 
means the Donaldson-Futaki invariant of the test configuration 
of $(\mathcal{X}_t,\mathcal{O}(1)=\mathcal{O}(-mK_{\mathcal{X}_t}))$ 
associated to the one parameter subgroup via Proposition \ref{tc.1-ps}. 
\end{Claim}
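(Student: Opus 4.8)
The plan is to combine the piecewise-linear and constructibility structure of the Donaldson--Futaki invariant (the extension of Lemma \ref{PL.mum} to $\Lambda_{CM}$) with the flat-family argument sketched just above the Claim, extracting from the finitely many linear pieces a finite set of candidate destabilizing one-parameter subgroups. First I would reduce the verification of $K_m$-(semi)stability to the fixed maximal torus $T$. By Proposition \ref{tc.1-ps} every relevant test configuration (very ample $\mathcal{L}$ of exponent $m$, nontrivial, with normal central fiber) arises from a one-parameter subgroup of $G=\SL(h^0(-mK_{\mathcal{X}_t}))$, and by the Hilbert--Mumford criterion \cite{Mum65} together with the conjugacy of one-parameter subgroups into $T$, the sign of $\DF$ over all of $G$ is controlled by its restriction to ${\rm Hom}_{{\rm alg.grp}}(\mathbb{G}_m,T)$ as the base point ranges over the $G$-orbit. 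Thus it suffices to understand the weight function $\lambda([\mathcal{X}_t\subset\mathbb{P}],-;\Lambda_{CM})=\DF(-;[\mathcal{X}_t\subset\mathbb{P}])$ on ${\rm Hom}_{{\rm alg.grp}}(\mathbb{G}_m,T)$.

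Next, by the extension of Lemma \ref{PL.mum} to the (non-ample) CM line bundle, this weight function extends to a piecewise-linear rational function on ${\rm Hom}_{{\rm alg.grp}}(\mathbb{G}_m,T)\otimes_{\mathbb{Z}}\mathbb{R}$ whose linear pieces are drawn from a fixed finite collection $\{l_i\}_{i\in I'}$, and whose combinatorial type $J_h$ is constructible in $h$. The common refinement of the domains of linearity of the $l_i$ is a finite rational polyhedral fan; I let $I$ be the finite set of primitive integral generators of its rays and $\{\varphi_i\}_{i\in I}$ the associated one-parameter subgroups of $T$. Because the weight function is linear on each cone of this fan, its sign on a cone is governed by its values on that cone's rays, so the condition $\DF(\varphi)>0$ (resp.\ $\geq 0$) for all nontrivial normal $\varphi$ becomes equivalent to $\DF(\varphi_j;[\mathcal{X}_t\subset\mathbb{P}])>0$ (resp.\ $\geq 0$) for all $j$ in the subset $J\subset I$ determined by the combinatorial type. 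Taking $H_J$ to be the constructible locus of points of $H_{normal}$ of that combinatorial type produces the desired stratification $\{H_J\}_{J\subset I}$.

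Finally, the constancy of $\DF(\varphi_j;[\mathcal{X}_t\subset\mathbb{P}])$ along each stratum---its dependence only on $j$ and $J$---follows from the flat-family (``splitting of orbits'') argument recorded before the Claim: on a fixed stratum the degeneration of $\mathcal{X}_t$ under a fixed $\varphi_j$ fits into a flat family, so the Donaldson--Futaki invariant is constant by the functoriality of the CM line bundle \cite{PT06}, or alternatively by Wang's formula \cite{Wan12}.

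I expect the main obstacle to be the torus reduction combined with the non-ampleness of $\Lambda_{CM}$: since conjugating a one-parameter subgroup into $T$ moves the base point, one must check that the constructible, $G$-equivariant structure over $T$ faithfully records $K_m$-stability for the whole orbit, and---because $\Lambda_{CM}$ is not ample---the weight function need not be convex, so the passage from ``positive along all directions'' to ``positive along finitely many rays'' must be argued from the piecewise-linear fan structure rather than from convexity. A secondary subtlety is matching the rays that actually carry nontrivial normal central fibers, where the normality reduction underlying $K_m$-stability \cite{LX11} applies, with the combinatorial data of the stratification.
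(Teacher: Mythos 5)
Your proposal follows essentially the same route as the paper: the finite set $I$ is extracted from the finitely many linear pieces of the (non-convex) CM-weight function via the extension of Mumford's piecewise-linearity lemma to $\Lambda_{CM}$, and the constancy of $\DF(\varphi_j;-)$ on each stratum comes from the flat-family (``splitting of orbits'') argument with \cite{PT06} or \cite{Wan12}, exactly as in the text preceding the Claim. The only organizational difference is that the paper reads the Claim as a statement about the torus $T$ alone and performs the reduction from $G$ to $T$ afterwards, via Chevalley's constructibility lemma applied to the action morphism $\SL(h^0(-mK_{\mathcal{X}_s}))\times H\rightarrow H$, rather than via Hilbert--Mumford conjugacy inside the proof as you do.
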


Summarizing up, we proved that the locus of Fano manifolds which are $K_m$-stable (resp., 
$K_m$-semistable, $K_m$-unstable, not $K_m$-stable) 
with respect to the maximal torus $T$'s action are all constructible subsets. 
Then the constructibility of 
$K_m$-stable locus in $H$ follows from 
C. Chevalley's lemma (cf.\ \cite{Har77}) which states that the image of 
constructible set by an algebro-geometric morphism is again constructible. 
Indeed, we can apply it to the group action morphism $\SL(h^0(-mK_{\mathcal{X}_s})) \times H\rightarrow H$ and the constructible subset of $H$ 
which parametrizes Fano manifolds which are not $K_m$-stable with respect to the 
action of $T$. Then, it shows that the subset of $H$ which 
parametrizes Fano manifolds with discrete automophism groups which are 
not $K_m$-stable is again constructible. 

On the other hand, using the deformation theory of 
K\"ahler metrics with constant scalar curvature it was shown in \cite{LS94} that 
\begin{equation*}
\{ s\in S\mid \mathcal{X}_{s} \mbox{ is a KE Fano manifold with } \#\Aut(X)<\infty \} \subset S
\end{equation*} 
is an open subset with respect to the Euclidean topology. More precisely speaking, 
we need to take resolution of singularities of $S$ to apply \cite{LS94}. 
Thus the locus of K\"ahler-Einstein Fano manifolds with finite automorphism groups  should be a Zariski open subset. We complete the proof of the main theorem. 

\end{proof}

We prove Theorem \ref{mod.sp} using Theorem \ref{M.thm} as follows. 

\begin{proof}[proof of Theorem \ref{mod.sp} (using Theorem \ref{M.thm})]
Consider the Hilbert scheme $H$ which exhausts all $m$-pluri-anticanonically embedded  Fano $n$-folds with uniform $m\in\mathbb{Z}_{>0}$. 
By applying the previous statement of Theorem \ref{M.thm} to 
a Zariski open covering of universal family over $H$, we obtain that 
the locus which parametrizes K\"ahler-Einstein Fano manifolds form 
a Zariski open locus. Therefore the natural moduli functor 
$\mathcal{F} \colon (Sch_{/\mathbb{C}})^{op}\rightarrow (Sets)$ whose $B$-valued  points set is defined by 
$$\{ \mbox{flat proj. } f\colon \mathcal{Y}\rightarrow B \mid  [\mathcal{Y}_{b}\subset \mathbb{P}(H^0(-mK_{\mathcal{Y}_b}))] 
\in H_{KE} \mbox{ for } \forall b\in B\}/\cong$$ 
naturally gives rise to 
Deligne-Mumford stack which is simply a quotient stack 
$[H_{KE}/\PGL(h^0(-mK_{\mathcal{X}_s}))]$. 
Then the Keel-Mori theorem \cite[Corollary 1.3]{KM97} implies that we have a coarse moduli algebraic space for it. 
The separatedness follows from a standard argument 
using the uniqueness of the limits as Riemannian manifolds (cf. e.g. \cite[Lemma 8.1]{CS10}) and the Bando-Mabuchi theorem \cite{BM85}, or \cite{Don01}. 

Moreover, the moduli algebraic stack (in the sense of Deligne-Mumford) is smooth as the 
deformation is unobstructed. It is because of the vanishing of obstruction group  $H^2(X,T_X)=H^2(X,\Omega^{n-1}\otimes K_{X}^{-1})=0$ thanks to the Akizuki-Nakano's generalization of the Kodaira vanishing \cite{AN54} combined with the finiteness of isotropy groups. Therefore the coarse moduli algebraic space has only quotient singularities. This completes the proof of Theorem \ref{mod.sp}. 

\end{proof}

\section{Revisiting the general conjecture}

The moduli space constructed in Theorem \ref{mod.sp} is expected to have 
canonical projective compactification by attaching (singular) $\mathbb{Q}$-Fano varieties on the boundary (cf.\ \cite{OSS12}) but let us review this in broader context 
as in \cite{Od10}. 
The author expects that K-semistable polarized varieties 
have coarse projective moduli in general. Including what we have already mentioned, the inspirations are mainly from the following backgrounds. 
\begin{itemize}
\item The \textit{moment map picture} on the infinite-dimensional 
space of complex structures by Fujiki, Donaldson (\cite{Fuj90}, \cite{Don97}, \cite{Don04}).  

\item The observations (\cite{Od08}, \cite{Od11}) that 
the projective moduli of general type varieties with semi-log-canonical singularities, 
constructed by using the \textit{minimal model program} (cf. \cite{Kol10}) 
and some other projective moduli corresponds to K-stability. 

\item The algebraicity of \textit{Gromov-Hausdorff limits} of K\"ahler-Einstein Fano manifolds (\cite{DS12}) and the Gromov (pre)compactness theorem (cf.\ \cite{MM93}, \cite{Spo12}, \cite{OSS12}). 
\end{itemize}

Or we can also say that the Yau-Tian-Donaldson conjecture itself is 
another background, as GIT polystably is originally designed by Mumford \cite{Mum65} to make them  form projective moduli although the K-stability is \textit{not} quite a GIT stability notion in that original sense. 
As a preparation for making this expectation rigorous, 
we introduce the following terminology. 
Let us call a numerical equivalence class of ample line bundle, 
a \textit{weak polarization} and a pair of projective variety and weak polarization 
a \textit{weakly polarized variety}. 

Then the following is a modified version of the conjecture stated in the Kinosaki 
symposium 2010. 

\begin{Conj}[K-moduli cf. \cite{Od10}]\label{K-moduli}
The moduli functor of K-semistable weakly polarized varieties has proper coarse moduli algebraic space\footnote{With 
infinitely many connected components. } $\bar{M}_{cscK}$. 
It set-theoritically parametrizes the set of all smooth K-polystable weakly polarized varieties and certain equivalence classes of 
singular weakly polarized varieties with singular cscK metrics. 
The connected components of $\bar{M}_{cscK}$ are all projective schemes, 
because the CM line bundle descends to ample line bundles on them. 
\end{Conj}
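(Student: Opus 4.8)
The plan is to realize each connected component of $\bar{M}_{cscK}$ as a coarse moduli space of a quotient stack, following the pattern of the proof of Theorem \ref{mod.sp} but now over the K-\emph{semistable} locus and for a general weak polarization. First I would fix the discrete invariants: the dimension, the Hilbert polynomial of $(X,L^{\otimes m})$, and, in the singular range, a bound on the $\mathbb{Q}$-Gorenstein index, so that after passing to a fixed, sufficiently divisible exponent $m$ all the relevant weakly polarized varieties embed into one projective space and are parametrized by a single Hilbert scheme $H$. The crucial input here is a \emph{boundedness} theorem for K-semistable weakly polarized varieties of fixed numerical type, playing the role that \cite{HMX12} and Theorem \ref{DS12} play in the Fano case. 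On $H$ one again has the CM line bundle $\Lambda_{CM}$ of \cite{PT06}, whose formal GIT weight along a one-parameter subgroup computes the Donaldson--Futaki invariant $\DF$.

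Second, I would generalize the constructibility analysis of the proof of Theorem \ref{M.thm}. Invoking the Li--Xu reduction \cite{LX11} to replace K-semistability by $K_m$-semistability for divisible $m$, together with Lemma \ref{PL.mum} and its extension to the (non-ample) CM line bundle, the quantity $\DF$ along one-parameter subgroups of a fixed maximal torus is piecewise linear and constructible in the moduli parameter; Chevalley's lemma applied to the $\SL$-action then upgrades this to constructibility of the K-semistable locus $H^{Kss}\subset H$, and --- once properness of degenerations is available --- to its openness. The quotient stack $[H^{Kss}/\PGL]$ is the candidate moduli stack, and the Keel--Mori theorem \cite{KM97} furnishes a coarse moduli algebraic space whose points are the claimed S-equivalence classes.

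Third come separatedness, properness, and projectivity. Separatedness is uniqueness of the K-polystable representative in each S-equivalence class, which in the Fano case is the Bando--Mabuchi theorem \cite{BM85} used for Theorem \ref{mod.sp} and which in general should follow from uniqueness of singular cscK metrics. Properness is the valuative criterion: every family over a punctured curve must, after finite base change, acquire a K-polystable central fibre of the same numerical type; in the Fano setting this is precisely the algebraicity of Gromov--Hausdorff limits (Theorem \ref{DS12}) combined with the index bound, whereas in general it demands an algebraic semistable reduction for K-stability. For projectivity of the components I would descend $\Lambda_{CM}$ to the coarse space and show it is ample: nefness follows from $\DF\geq 0$ (K-semistability gives non-negative CM degree on every complete family), and strict positivity on curves, hence ampleness by the Nakai--Moishezon criterion (cf.\ the KSBA analogue \cite{Kol10} and the moment-map picture of \cite{Fuj90}, \cite{Don97}), should follow from K-stability of the generic member.

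The hard part is the properness step outside the Fano range. The analytic compactness together with algebraicity of limits that \cite{DS12} supplies exists only for K\"ahler--Einstein Fano manifolds; for a general polarization there is as yet no substitute producing a cscK (equivalently K-polystable) degeneration, and even the prerequisite boundedness is unknown. Thus, while the GIT-theoretic and Keel--Mori bookkeeping and the descent of the CM line bundle are expected to go through once the limits are in hand, it is the existence and algebraicity of the cscK degenerations --- not the formal moduli theory --- that keeps Conjecture \ref{K-moduli} a conjecture.
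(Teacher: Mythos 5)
The statement you were given is Conjecture \ref{K-moduli}; the paper does not prove it and does not claim to. It is stated as an open expectation, supported only by a list of motivating backgrounds (the moment-map picture of \cite{Fuj90}, \cite{Don97}, the correspondence between KSBA-type moduli and K-stability from \cite{Od08}, \cite{Od11}, and the algebraicity of Gromov--Hausdorff limits \cite{DS12}), together with the explicitly worked-out del Pezzo case \cite{OSS12}. So there is no proof in the paper to compare yours against, and your submission is necessarily a roadmap rather than a proof --- which, to your credit, you say plainly in your last paragraph.

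As a roadmap it is faithful to what the paper actually does in the smooth Fano case: the Hilbert-scheme and CM-line-bundle bookkeeping, the $K_m$-reduction via \cite{LX11}, the piecewise-linearity/constructibility and Chevalley arguments, and the Keel--Mori step are precisely the ingredients of the proofs of Theorems \ref{mod.sp} and \ref{M.thm}, and you correctly identify the three genuinely open inputs: boundedness of K-semistable weakly polarized varieties of fixed numerical type, an algebraic semistable-reduction statement producing a K-polystable (singular cscK) filling (which even in the Fano case rests on the differential-geometric compactness of \cite{DS12}), and the descent plus ampleness of the CM line bundle, where even nefness is delicate because $\Lambda_{CM}$ is not ample on the Hilbert scheme. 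Two cautions. First, constructibility of the (semi)stable locus does not by itself give Zariski openness: in the paper, Theorem \ref{M.thm} needs the additional analytic input of \cite{LS94} to get Euclidean openness before concluding, and no analogue is available for general polarizations; moreover the K-semistable locus, unlike the KE locus with finite automorphisms, need not be open in any obvious way. Second, the coarse space of $[H^{Kss}/\PGL]$ does not automatically identify S-equivalent points; collapsing to polystable representatives is an extra step that in GIT comes from ampleness of the linearization, which is exactly what is missing here. Neither caution is a defect of your write-up so much as a confirmation that the conjecture remains a conjecture.
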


\noindent
We refer to \cite{OSS12} for 
detailed discussions on our $\mathbb{Q}$-Fano varieties case 
which largely settled (smoothable) del Pezzo surfaces case explicitly. 
In that paper, we have constructed explicit moduli for all degrees 
as the Gromov-Hausdorff compactifications. 
Indeed, the author expects more generally the K-moduli is close to a ``\textit{Gromov-Hausdorff compactification}" in appropriately rescaled sense, of moduli of smooth cscK polarised varieties. 

The point of view with Gromov-Hausdorff limit had been missed in the original K-moduli expectation \cite{Od10} and inspired by differential geometric colleagues later (e.g. \cite{DS12}, \cite{Spo12}), which the author is truly greatful.  Especially the memorable collaboration 
with Spotti and Sun \cite{OSS12} makes the author more confident on 
the general conjecture above. Moreover, very recently the wonderful 
work of Berman-Guenancia \cite{BG13} -extending hyperbolic metrics on 
stable curves- appeared which constructed singular 
K\"ahler-Einstein (thus cscK) metrics on semi-log-canonical varieties with ample canonical classes. We note that allowing just the semi-log-canonical singularities should be more general phenomenon for all classes of K-semistable (weakly) polarized  varieties \cite{Od08}. 

\section{About the limits}

An interesting problem is to recover or to have better understanding of 
the Gromov-Hausdorff limits of punctured family of K-stable varieties or 
``maximally destabilizing" test configurations of K-unstable varieties 
via the minimal model program along the line of 
\cite{Od08}, \cite{LX11}. (The author expects that the 
destabilizing test configuration of non-slc varieties \cite{Od08} via MMP 
is ``maximal" (optimal) in some appropriate sense. ) 

The following characterization for special case is essentially obtained in 
\cite{OS12}, which the author was not aware for a while.

\begin{Thm}[CM minimizing for (special) Fano varieties]\label{CM.min.Fano}
Suppose $\pi\colon \mathcal{X}\rightarrow C\ni 0$ is a $\mathbb{Q}$-Gorenstein flat projective family of $n$-dimensional $\mathbb{Q}$-Fano varieties over a projective curve $C$ and the alpha invariant of the central fiber $\it glct(\mathcal{X}_0;-K_{\mathcal{X}_0})$ 
is at least (resp.\ bigger than) $\frac{n}{n+1}$. Regarding $\pi$ as 
a polarized family but putting anticanonical line bundle on $\mathcal{X}$, any other flat polarized family  $(\mathcal{X}',\mathcal{L}')\rightarrow C$ which is isomorphic to $\mathcal{X}\rightarrow C$ away from the central fiber, we have 
$$
{\it deg}(CM(\mathcal{X},\mathcal{-K_{\mathcal{X}/C}}))\leq (resp.\ <)
{\it deg}(CM(\mathcal{X}',\mathcal{L}')). 
$$
\end{Thm}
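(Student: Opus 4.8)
The plan is to localize the comparison at $0\in C$ and then to extract the sign of the resulting local contribution from the alpha invariant hypothesis, following the discrepancy-theoretic computation of \cite{OS12}. Since the CM line bundle is functorial and the two polarized families agree over $C\setminus\{0\}$, the line bundles $CM(\mathcal{X},-K_{\mathcal{X}/C})$ and $CM(\mathcal{X}',\mathcal{L}')$ are isomorphic over $C\setminus\{0\}$, so the difference $\deg CM(\mathcal{X}',\mathcal{L}')-\deg CM(\mathcal{X},-K_{\mathcal{X}/C})$ is a contribution concentrated at $0$. First I would reduce to $\mathcal{X}'$ normal: normalization is an isomorphism over $C\setminus\{0\}$ and does not increase the CM degree (cf.\ the monotonicity in \cite{Od11}, \cite{LX11}), and since we are proving a lower bound this only strengthens the competitor. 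I would then fix a common normal model with projective birational morphisms $p\colon\mathcal{Y}\to\mathcal{X}$ and $q\colon\mathcal{Y}\to\mathcal{X}'$, both isomorphisms over $C\setminus\{0\}$, and write
$$ q^{*}\mathcal{L}' = p^{*}(-K_{\mathcal{X}/C}) + E, $$
where $E$ is a $\mathbb{Q}$-Cartier divisor supported on the central fibre $\mathcal{Y}_0$, which measures the whole discrepancy between the two fillings.

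Next I would compute both CM degrees by Wang's intersection-number formula \cite{Wan12} (equivalently via the Paul--Tian definition \cite{PT06}) after pulling everything back to the $(n+1)$-dimensional total space $\mathcal{Y}$. As the families coincide over $C\setminus\{0\}$, every term not involving $E$ cancels and the difference collapses to a single intersection number localized over $0$, assembled from $E$, $p^{*}(-K_{\mathcal{X}/C})$ and the relative canonical class $K_{\mathcal{Y}/C}$. Splitting $K_{\mathcal{Y}/C}=p^{*}K_{\mathcal{X}/C}+(\text{relative discrepancy of }p)$ and rewriting in the discrepancy form of the Donaldson--Futaki invariant established in \cite{Od08}, \cite{Od11}, this localized difference takes the shape of a volume (Chow-type) term in $E$ plus a discrepancy term recording the log canonical defect of the degenerate central configuration $\mathcal{Y}_0$; this is exactly the intersection number whose sign decides $\DF$.

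The hypothesis enters through the estimate underlying the alpha invariant criterion. By lower semicontinuity of the global log canonical threshold, $\mathrm{glct}(\mathcal{X}_0;-K_{\mathcal{X}_0})\le\mathrm{glct}(\mathcal{X}_t;-K_{\mathcal{X}_t})$ for general $t$, so the bound on the central fibre propagates to the whole family and, crucially, it provides the lower bound on the log canonical thresholds of the pluri-anticanonical members supported on $\mathcal{Y}_0$ that controls the discrepancy term above. This is the content of the alpha invariant $K$-stability criterion of \cite{OS12} (compare Theorem \ref{weak.alpha} and Corollary \ref{w.weak.alpha}, which record that $\mathrm{glct}\ge n/(n+1)$ already yields the relevant $K$-semistability): the sharp threshold $n/(n+1)$ forces the localized intersection number to be $\ge 0$, and $\mathrm{glct}>n/(n+1)$ together with $E\ne 0$ (a genuinely different filling) forces it to be $>0$. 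This is precisely the asserted inequality $\deg CM(\mathcal{X},-K_{\mathcal{X}/C})\le\deg CM(\mathcal{X}',\mathcal{L}')$ (resp.\ $<$). That the hypothesis cannot be dropped is transparent from this viewpoint: were $\mathcal{X}_0$ $K$-unstable, a destabilizing degeneration could be grafted onto the family near $0$ to produce a filling of strictly smaller CM degree.

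The hard part is the final estimate: showing that the single scalar bound on $\mathrm{glct}(\mathcal{X}_0)$ pins down the sign of the localized discrepancy term for an \emph{arbitrary} competing filling and polarization $\mathcal{L}'$, rather than only for $\mathbb{G}_m$-equivariant (test-configuration) modifications. Two points require care. First, one must bound the log canonical thresholds on the possibly badly singular configuration $\mathcal{Y}_0$ in terms of the alpha invariant of $\mathcal{X}_0$, which is the technical heart of \cite{OS12} and is where the value $n/(n+1)$ is used sharply. Second, to reach the situation where the discrepancy formula is cleanest one may prefer to run a relative minimal model program over $C$ in the spirit of \cite{LX11}, each step lowering the CM degree and terminating at the relative anticanonical model --- which, as every fibre of $\mathcal{X}\to C$ is $\mathbb{Q}$-Fano, is $\mathcal{X}\to C$ itself; carrying this out over a general base curve, without the $\mathbb{G}_m$-action present in the test-configuration setting, is the remaining obstacle.
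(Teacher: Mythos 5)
Your proposal follows essentially the same route as the paper's (sketched) proof: resolve the indeterminacy of $\mathcal{X}\dashrightarrow\mathcal{X}'$, express the pullback of $\mathcal{L}'$ as the pullback of $-K_{\mathcal{X}/C}$ twisted by an exceptional divisor $E$ supported over $0$, and control the sign of the resulting localized difference of CM degrees by the discrepancy-theoretic alpha-invariant estimate of \cite{OS12} with the threshold $\frac{n}{n+1}$. The paper likewise defers the key estimate to \cite{OS12}, so your level of detail matches its sketch.
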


In general, the author expect that the - a priori differential geometric notion - the Gromov-Hausdorff limit of punctured K-stable family of 
$\mathbb{Q}$-Fano varieties is, algebraically, characterized as \textit{what minimizes the CM degree} as above. We also expect the construction or above characterization is to be able to 
prove via the minimal model program or differential geometrically K\"ahler-Ricci flow. An easier version may be 
trivial family of K-semistable Fano varieties case (which, at least in smooth case by 
\cite{CDS13}, \cite{Tia12b}, isotrivially degenerates to 
K-polystable Fano manifolds). 

We only sketch the proof of Theorem \ref{CM.min.Fano} as it is essentially the arguments of 
\cite{OS12}. 
First we take a resolution of indeterminancy 
of $\mathcal{X}\dashrightarrow\mathcal{X}'$ as blow up $\pi\colon \mathcal{B}\rightarrow\mathcal{X}$ where 
$K_{\mathcal{X}}^{\otimes{(-r)}}(-E)$ with the Cartier exceptional divisor $E$ is the pullback of 
$\mathcal{L}'^{\otimes{(-r)}}$. Then we can estimate 
$${\it CM}(\mathcal{X}',\mathcal{L}')-
{\it CM}(\mathcal{X},K_{\mathcal{X}}^{\otimes{(-r)}})=
CM(\mathcal{B},\pi^*K_{\mathcal{X}}^{\otimes{(-r)}}(-E))-CM(\mathcal{X},\mathcal{L}), $$
i.e., the difference of CM lines of $\mathcal{X}$ and $\mathcal{X}'$, in a completely 
similar way as in \cite{OS12} in which they did for the case $\mathcal{X}$ is 
$\mathcal{X}_0\times \mathbb{P}^1$ via the basic theory of discrepancy. 

Analogously, for Calabi-Yau varieties and general type varieties, 
if we apply the estimates of Donaldson-Futaki invariants at \cite{Od11} similarly to 
the differences of CM degrees of a family and its blow up, which we can obtain as a 
resolution of rational map $\mathcal{X}\dashrightarrow \mathcal{X}'$, 
we can (re)prove the following. The semi-log-canonical models (``general type") case was obtained by X. Wang - C. Xu earlier 
\cite{WX12}, so in that sense, our results are its generalization to 
different types of varieties. 

\begin{Thm}[Analogues]

\begin{enumerate}

\item (Calabi-Yau case) 
Suppose $\pi\colon \mathcal{X}\rightarrow C\ni 0$ is a $\mathbb{Q}$-Gorenstein flat projective family of $n$-dimensional semi-log-canonical models over a projective curve $C$. Regarding $\pi$ as 
a polarized family but putting canonical line bundle on $\mathcal{X}$, any other flat polarized family  $(\mathcal{X}',\mathcal{L}')\rightarrow C$ which is isomorphic to $\mathcal{X}\rightarrow C$ away from the central fiber, we have 
$$
{\it deg}(CM(\mathcal{X},\mathcal{-K_{\mathcal{X}/C}}))\leq (resp.\ <)
{\it deg}(CM(\mathcal{X}',\mathcal{L}')). 
$$

\item (Canonical model case: \cite[Theorem 6]{WX12})
Suppose $\pi\colon \mathcal{X}\rightarrow C\ni 0$ is a $\mathbb{Q}$-Gorenstein flat projective (``KSBA") family of $n$-dimensional semi-log-canonical models over a projective curve $C$. Regarding $\pi$ as 
a polarized family but putting canonical line bundle on $\mathcal{X}$, any other flat polarized family  $(\mathcal{X}',\mathcal{L}')\rightarrow C$ which is isomorphic to $\mathcal{X}\rightarrow C$ away from the central fiber, we have 
$$
{\it deg}(CM(\mathcal{X},\mathcal{-K_{\mathcal{X}/C}}))\leq (resp.\ <)
{\it deg}(CM(\mathcal{X}',\mathcal{L}')). 
$$

\end{enumerate}

\end{Thm}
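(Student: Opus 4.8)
The plan is to run, essentially verbatim, the argument just sketched for Theorem~\ref{CM.min.Fano}, with the anticanonical polarization replaced by the (relative) canonical one and with the sign of the resulting CM difference extracted from the discrepancies of the slc model rather than from the $\alpha$-invariant. First I would resolve the indeterminacy of $\mathcal{X}\dashrightarrow\mathcal{X}'$ by a normal variety $\mathcal{B}$ carrying proper birational morphisms $p\colon\mathcal{B}\to\mathcal{X}$ and $q\colon\mathcal{B}\to\mathcal{X}'$, normalized (after clearing denominators by some $r\in\mathbb{Z}_{>0}$) so that $q^*\mathcal{L}'^{\otimes r}=p^*K_{\mathcal{X}/C}^{\otimes r}(-E)$ for a $p$-exceptional effective $\mathbb{Q}$-Cartier divisor $E$. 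Such an $E$ exists and is effective precisely because the two families agree over $C\setminus\{0\}$, so the discrepancy between the two polarizations is concentrated over the central fibre, and the negativity/contraction lemma pins down its sign once $\mathcal{X}$ is the relevant (canonical, resp.\ Calabi-Yau) slc model.

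Next I would use that the degree over $C$ of the CM line bundle is a top intersection number on the total space (\cite{PT06}), hence invariant under proper birational pullback of the pair (family, polarization) by the projection formula; note in particular that the $p$-exceptional divisors are $(p^*\mathcal{L})$-numerically trivial, so $\deg CM(\mathcal{X},K_{\mathcal{X}/C}^{\otimes r})=\deg CM(\mathcal{B},p^*K_{\mathcal{X}/C}^{\otimes r})$ even though the relative-canonical term is computed with $K_{\mathcal{B}/C}$. This yields
\begin{equation*}
\deg CM(\mathcal{X}',\mathcal{L}')-\deg CM(\mathcal{X},K_{\mathcal{X}/C}^{\otimes r})
=\deg CM(\mathcal{B},p^*K_{\mathcal{X}/C}^{\otimes r}(-E))-\deg CM(\mathcal{B},p^*K_{\mathcal{X}/C}^{\otimes r}).
\end{equation*}
Expanding the right-hand side as a polynomial in $E$ and inserting the discrepancy identity $K_{\mathcal{B}/C}=p^*K_{\mathcal{X}/C}+\sum_i a_iE_i$ rewrites the difference as a sum of terms of the form $(1+a_i)$ times intersection numbers of $E$ against powers of $p^*K_{\mathcal{X}/C}$, exactly the bookkeeping carried out in \cite{OS12} and the Donaldson-Futaki estimate of \cite{Od11}.

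Finally I would read off the sign case by case. In the canonical-model case (ii) the polarization $K_{\mathcal{X}/C}$ is relatively ample and $(\mathcal{X},\emptyset)$ is an slc model, so each $1+a_i\geq 0$; combined with the effectivity and $p$-exceptionality of $E$ this forces every intersection term to be non-negative, recovering \cite[Theorem~6]{WX12}, with strict inequality as soon as some relevant $a_i>-1$. The Calabi-Yau case (i) runs through the same computation, the one structural difference being that $K_{\mathcal{X}/C}$ is numerically trivial on the fibres, so the leading $(\mathcal{L}^{n+1})$-type term (weighted by the slope) vanishes and the entire difference is governed by the lower-order, purely discrepancy-theoretic contributions supported over $0\in C$. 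I expect the genuine obstacle to be the termwise sign control of these intermediate intersection numbers: one must combine the effectivity of $E$ with the negativity-of-contraction lemma (a Hodge-index type input on $\mathcal{B}$) and the slc bound $a_i\geq-1$ to guarantee non-negativity of each contribution, and in the Calabi-Yau case additionally rule out an indefinite cross term coming from the degenerate polarization — this is the point at which one really needs $\mathcal{X}$ to be an slc \emph{model} and not merely a log-canonical pair.
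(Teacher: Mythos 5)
Your proposal follows essentially the same route as the paper's (very brief) proof sketch: resolve the indeterminacy of $\mathcal{X}\dashrightarrow\mathcal{X}'$, express the difference of CM degrees as intersection numbers on the common model, and control the sign via the discrepancy estimates of \cite{Od11}, \cite{OS12} (and \cite{WX12} for the canonical-model case). The points you flag as delicate (effectivity of $E$ up to fibre components, termwise sign control of the intermediate intersection numbers) are exactly the content of those cited estimates, so there is no divergence from the paper's argument.
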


\begin{Cor}[well known? e.g. \cite{Bou13}]
If a punctured family of polarized log terminal Calabi-Yau varieties 
can be completed with a log terminal CY filling, there is no other semi-log-canonical 
CY filling. 

In particular, log terminal polarized Calabi-Yau varieties have Hausdorff moduli. 
\end{Cor}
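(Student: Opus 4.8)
The plan is to read both assertions off the Calabi--Yau item of the preceding Analogues theorem together with the valuative criterion of separatedness. First I would record that a moduli algebraic space of polarized varieties is Hausdorff precisely when, over every smooth pointed curve $C\ni 0$ (equivalently, every discrete valuation ring), a family over the punctured curve $C\setminus\{0\}$ belonging to the moduli problem extends in at most one way to a family over $C$ inside the problem. Granting the existence of the moduli as an algebraic space from the standard deformation theory of polarized log terminal Calabi--Yau varieties, the ``in particular'' clause is then exactly the uniqueness-of-fillings statement that precedes it, so it suffices to prove the first assertion.

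For the first assertion, let $\mathcal{X}\to C$ be a log terminal Calabi--Yau filling of the given punctured family and let $(\mathcal{X}',\mathcal{L}')\to C$ be a second semi-log-canonical Calabi--Yau filling of the same punctured family, both carrying the ample polarization extending the one on $C\setminus\{0\}$; I want to force $\mathcal{X}'\cong\mathcal{X}$ over $C$. The mechanism is the strict/non-strict dichotomy in the CM-minimization statement, which is governed by the singularities of the \emph{reference} family: a log terminal (klt) reference yields the strict inequality unless the competitor coincides with it, whereas a merely semi-log-canonical reference yields only the weak inequality. Applying the statement with the klt filling $\mathcal{X}$ as reference and $\mathcal{X}'$ as competitor gives $\deg CM(\mathcal{X})\leq \deg CM(\mathcal{X}')$, strictly unless $\mathcal{X}'=\mathcal{X}$; applying it in the opposite direction, with the slc filling $\mathcal{X}'$ as reference and $\mathcal{X}$ as competitor, gives $\deg CM(\mathcal{X}')\leq \deg CM(\mathcal{X})$. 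Chaining the two forces $\deg CM(\mathcal{X})=\deg CM(\mathcal{X}')$, whence the klt inequality cannot be strict and $\mathcal{X}'=\mathcal{X}$. Thus the log terminal filling is the unique semi-log-canonical filling.

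To make the two applications precise I would mirror the blow-up computation sketched for Theorem \ref{CM.min.Fano}: resolve $\mathcal{X}\dashrightarrow\mathcal{X}'$ by a common $\mathcal{B}\to\mathcal{X}$, write the transform of the polarization as $\pi^*K_{\mathcal{X}/C}(-E)$ with Cartier exceptional $E$, and express $\deg CM(\mathcal{X}')-\deg CM(\mathcal{X})$ as a discrepancy contribution supported on $E$, exactly as in the estimates of \cite{OS12} and \cite{Od11}. The sign of this contribution is then read off the discrepancies $a(E)$ of the reference model: log terminal forces every $a(E)>-1$, hence strict positivity of the correction whenever $E\neq 0$, while semi-log-canonical forces only $a(E)\geq -1$ and hence a weak inequality. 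This is precisely the asymmetry that the argument exploits, and it is the reason one of the two fillings must be assumed log terminal: if both were only slc the two weak inequalities would give no contradiction, reflecting the genuine non-uniqueness of slc fillings up to flops.

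The step I expect to be the main obstacle is the bookkeeping of polarizations, which is also where the word ``polarized'' in the statement does its work. Because the fibers are Calabi--Yau, $K_{\mathcal{X}/C}$ is fiberwise numerically trivial, so a flop of $\mathcal{X}$ is crepant and leaves the purely canonical contribution to the CM degree unchanged; the two fillings can therefore be separated only through a term involving the \emph{ample} polarization, such as $K_{\mathcal{X}/C}\cdot\mathcal{L}^{n}$ over $C$. I must take the CM degrees with respect to the polarizations extending the common ample class on $C\setminus\{0\}$, check that the top self-intersection term agrees for the two fillings (so that the difference indeed reduces to the discrepancy term above), and verify that the ampleness of $\mathcal{L}$ makes this term strictly positive for the klt reference even when $\mathcal{X}\dashrightarrow\mathcal{X}'$ is a flop. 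Once these compatibilities are in place the chain of inequalities closes, and the uniqueness --- and hence the separatedness of the moduli --- follows.
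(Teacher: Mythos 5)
Your derivation is correct and is exactly the route the paper intends: the Corollary is stated without an explicit proof as a consequence of the Calabi--Yau item of the Analogues theorem, and the intended argument is precisely your two-sided application of CM-degree minimization (strict inequality for the log terminal reference, weak inequality for the semi-log-canonical reference), which forces the two fillings to coincide and then yields separatedness via the valuative criterion. Your closing remarks on the flop/polarization bookkeeping correctly identify the only genuine subtlety in making the strict inequality for the klt reference airtight, and they are consistent with how the estimates of \cite{Od11} and \cite{OS12} are meant to be applied.
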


We end this short article by saying regretfully that the above CM minimizing result was once claimed with Richard Thomas for any K-(semi)stable limit (family) whose proof had a gap. We still hope a possibility to recover that in general situation.

\begin{ack}
The author appreciates the organizers of the Kinosaki symposium 2013, 
S.~Kuroda, K.~Yamada, K.~Yoshioka, for inviting him to give the opportunity. 

The author would like to thank Professor S.~Donaldson 
for useful discussions and warm encouragements. 
He also would like to thank 
Professors I.~Cheltsov, C.~Xu, V.~Tosatti, S.~Boucksom, 
Doctors S.~Song, C.~Spotti,  
for useful discussions and comments, 
Professor G.~Tian for kindly sending the pdf file of \cite{Tia12a}. 
\end{ack}

\end{document}